\documentclass[11 pt]{article}
\setlength{\textheight}{23cm}
\setlength{\textwidth}{16cm}
\setlength{\oddsidemargin}{0cm}
\setlength{\evensidemargin}{0cm}
\setlength{\topmargin}{0cm}
\usepackage{amsmath, amssymb}
\usepackage{amsthm} 
\usepackage{dcpic,pictex}
\usepackage{enumerate}

\theoremstyle{plain} 
\newtheorem{theorem}{\indent\bf Theorem}
\newtheorem{lemma}[theorem]{\indent\bf Lemma}
\newtheorem{corollary}[theorem]{\indent\sc Corollary}

\newtheorem{proposition}[theorem]{\indent\bf Proposition}

\theoremstyle{plain} 

\newtheorem{remark}[theorem]{Remark}

\def \EE {\mathbb{E}}
\def \RR {\mathbb{R}}
\def \pie {\pi_{\mathbb{E}}}
\def \jpe {J_p^1\mathbb{E}}
\def \jpm {J_p^1M}

\def \jp {J_p^1}
\def \j2p {J_{2p}^2}
\def \j2 {J^2}
\def \cson {C^{\infty}}

\makeatother

\title{\uppercase{ Canonical Involution on Double Jet Bundles}}

\author{H\"{u}lya Kad{\i}o\u{g}lu \\ \small{Yildiz Technical University, Esenler, Istanbul TURKEY}, \\ 
{\small Department of Mathematics, Idaho State University, Pocatello, ID 83209-8085, USA}, \\ \textit{email:} hkadio@yildiz.edu.tr}
\date{}
\begin{document}
   \maketitle
   
   \footnote{ 
2010 \textit{Mathematics Subject Classification:}
Primary 58A20 ; Secondary 55R25, 58A05.
}
\footnote{ 
\textit{Key words and Phrases: Double Jet Bundle, Double Vector Bundle, Second Order Jets, Canonical Involution, Tangent Bundle of Higher Order.} 
}
\footnote{
{\textit{Correspond to: Hulya Kadioglu, Yildiz Technical University, Istanbul-TURKEY,  email: hkadio@yildiz.edu.tr}}} 

   \begin{abstract} 
	In this study, we generalize double tangent bundles to double jet bundles.  We present a secondary vector bundle structure on a 1-jet of a vector bundle. We show that 1-jet of a vector bundle carries two vector bundle structures, namely primary and secondary structures.  We also show that the manifold charts induced by primary and secondary structures belong to the same atlas. We prove that double jet bundles can be considered as a quotient of second order jet bundle. We show that there exists a natural involution that interchanges between primary and secondary vector bundle structures on double jet bundles.
    
   \end{abstract}    

\maketitle

 \section{Introduction}

  In general, there are two ways to define $k-$ jets. The first definition is based on using the sections of a fibered manifold. In this definition, a $k-$jet is an equivalence class determined by an equivalence relation $\backsim_k$. Two sections of a fibered manifold are called $k-$related by the relation $\backsim_k$, if they have the same Taylor polynomial expansion at the point $x$ truncated at order k. This definition usually leads to a geometric approach which is applied to study of systems of differential equations  ( we refer to \cite{Baker}, \cite{Barco}, \cite{Bocharov}, \cite{Krasil}, \cite{Wu}, and \cite{Saunders} for more details).
\newline

The second and the more general definition of jet bundles is based on using the functions from  $N$ to $M$, where $N$ and $M$ are smooth manifolds. In this definition, a $k-$ jet of a function $f$ at $x$ is an equivalence class defined by an equivalence relation $\backsim_k$.  The equivalence of two functions is defined by the same way as the equivalence of sections.  The collection of all $k-$jets is called a $k-$jet bundle. One particular case is when $N=\RR$. In this case, the jet bundle is called tangent bundle of higher order. This jet bundle possesses a certain kind of geometric structure, which is called almost tangent structure of higher order. More generally, if $N=\RR ^p$, then the jet bundle is called the tangent bundle of $p^k$ velocities. This concept was introduced by Ehresmann to develop classical field theory in autonomous sense \cite{Leon}.
\newline

Throughout this paper, we will be considering the second definition: 1-jets with source at the origin of $\RR ^p$, and targeted in $M$. In section 2, we give some preliminary theorems that we need for the paper. In section 3, we study  1-jet of an arbitrary vector bundle and present a secondary vector bundle structure on a 1-jet of the vector bundle. Due to two vector bundle structures, 1-jet of a jet bundle is considered as a double vector bundle (DVB). In terms of the induced manifold structures on the total space of DVB, we prove that both belongs to the same atlas. We also prove that 1-jet of a jet bundle (double jet bundle) can be considered as a quotient of a second order jet bundle. Moreover, we show that two vector bundle structures are isomorphic on a double jet bundle by defining a canonical involution that interchanges between such structures. In Appendix section (Section 4), we prove some identities and statements that we used throughout the paper.


\section{Preliminaries}

 In this section we summarize some necessary preliminary materials that we need for a self contained presentation of our paper.

\subsection{ First Order Jets: $\jpm $}

Let $\cson (\RR ^p)$ be the algebra of $\cson $ -functions on the Euclidean space $\RR ^p$ with natural coordinates $(u_1,u_2,...,u_p)$. Let $f,g \in \cson (\RR ^p)$; $f$ is equivalent to $g$ if $f(0)=g(0)$ and $\partial /\partial u_i (f)=\partial /\partial u_i (g)$ at $u=0 \in \RR ^p$ for every $i=1,2,...,p$; clearly, this is an equivalence relation. 

Now, let $M$ be an $m$ dimensional manifold. Consider the set $C^{\infty}(\RR ^p;M)$ of all smooth maps $\phi:\RR ^p \to M$, and take elements $\phi,\xi \in C^{\infty}(\RR ^p;M)$ . Then $\phi$ is equivalent to $\xi$ if $f \circ \phi$ is equivalent to $f \circ \xi$ for every $f \in \cson (M)$.This is again an equivalence relation; and denoted by $j^1(\phi)$ the equivalence class of $\phi \in \cson (\RR ^p;M)$, and called a 1-jet in $M$ at $\phi (0)$. We denote $\jpm $ the set of all equivalence classes in $\cson (\RR ^p;M)$. If $(U, x_1,...x_m)$ is a local chart in $M$, then $(J_p^1U, x_1,...,x_m,x_{\alpha}^1,....x_{\alpha}^m)$ is the local chart for $\jpm $, with $\alpha=1,2,...,p$ by

\begin{eqnarray}
x_i(j^1(\phi))=x_i(\phi(0)) \nonumber \\ 
x_{\alpha}^i(j^1(\phi))=\frac{\partial(x_i \circ \phi)}{\partial u_{\alpha}}|_0. \label{equation1}
\end{eqnarray}

Let $j^1\phi$ be an arbitrary point, and let $\phi_{\alpha}:\mathbb{R} \to M$ be the differentiable curve given by $\phi_{\alpha}(u)=\phi(0,...,u,...0)$, with $u$ at the $\alpha^{th}$ place; then, associated to $j^1\phi$ there is a unique $(p+1)$-tuple $[x;X_1,...,X_p]$ given by 

\begin{equation}
x=\phi(0) \hspace{2cm} X_{\alpha}=\phi^{\alpha}(\frac{d}{du}|_0), \nonumber 
\end{equation} 

where $d/du$ is the canonical vector field tangent to $\RR $. From now on, we shall write $[x;X_1,...,X_p]$ simply as $[x;X_{\alpha}]$ and shall identify $j^1\phi \equiv [x;X_{\alpha}]$ if there is no confusion.

\begin{remark} \nonumber
In later sections, we sometimes use the notation $X_{\alpha}$ for $ \frac{\partial \Phi}{\partial u_{\alpha}}|_0 $.
\end{remark}

Now we focus on some functorial properties of jet bundles.

\begin{theorem} \label{corderoo} \cite{Cordero}

\begin{enumerate}[(i)]
\item If $h:M \to N$ is a differentiable function, then $h$ induces a canonical differentiable map $h^1: \jpm \to J_p^1N$ given by

\begin{equation*}
h^1(j^1\phi)=j^1(h\circ \phi),\hspace{2cm} \forall j^1\phi \in \jpm 
\end{equation*}

and in terms of previous identification, we have, $h^1([x;X_{\alpha}])=[h(x);h_{\ast}X_{\alpha}]$. 

\item If $h$ is a diffeomorphism, then the induced map is also a diffeomorphism and, moreover $(h^1)^{-1}=(h^{-1})^1$. Also for the manifolds $M$ and $N$, $\jp (M \times N)$ is diffeomorphic to $\jp M \times \jp N$.

\item If $M$ is a real vector space of dimension $m$, then $\jpm $ inherits a vector space structure: for any $j^1f ,j^1g \in \jpm $ and $\lambda \in \RR $, operations are given by

\begin{equation*}
j^1f +j^1g=j^1(f+g), \hspace{2cm} \lambda j^1f =j^1(\lambda f), 
\end{equation*}

where $f+g$ and $\lambda f$ are defined in the usual way. Vector space operations of $\jpm $ as $[x;X_{\alpha}]+\lambda [y;Y_{\alpha}]=[x+\lambda y;X_{\alpha}+\lambda Y_{\alpha}]$.

\end{enumerate}
\end{theorem}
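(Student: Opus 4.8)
The plan is to handle the three parts in order, deriving (ii) and (iii) from the functoriality set up in (i). For (i) the first task is well-definedness of $h^1$ on equivalence classes, and here the only real subtlety is that the jet relations on $\cson(\RR^p;M)$ and on $\cson(\RR^p;N)$ are defined through two different families of test functions. Suppose $j^1\phi=j^1\xi$, i.e.\ $f\circ\phi$ and $f\circ\xi$ agree to first order at $0$ for every $f\in\cson(M)$. To conclude $j^1(h\circ\phi)=j^1(h\circ\xi)$ I must show $g\circ(h\circ\phi)$ and $g\circ(h\circ\xi)$ agree to first order for every $g\in\cson(N)$. The key observation is that $g\circ h\in\cson(M)$ and
\[
g\circ(h\circ\phi)=(g\circ h)\circ\phi,\qquad g\circ(h\circ\xi)=(g\circ h)\circ\xi,
\]
so the required equivalence is exactly the hypothesis applied to the test function $f=g\circ h$. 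Once well-definedness holds, smoothness follows by writing $h$ in charts: with components $h^i=x_i\circ h$, the formulas \eqref{equation1} give $x_i(h^1(j^1\phi))=h^i(\phi(0))$ and, by the chain rule, $x_\alpha^i(h^1(j^1\phi))=\sum_j (\partial h^i/\partial x_j)(\phi(0))\,x_\alpha^j(j^1\phi)$, both smooth in the coordinates of $j^1\phi$. The identification $h^1([x;X_\alpha])=[h(x);h_\ast X_\alpha]$ then comes from the same computation applied to the defining curves, since $(h\circ\phi)_\alpha=h\circ\phi_\alpha$ gives $(h\circ\phi_\alpha)_\ast(d/du|_0)=h_\ast\phi_{\alpha\ast}(d/du|_0)=h_\ast X_\alpha$.

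For (ii), I would first record the functor identities that drop straight out of the definition $h^1(j^1\phi)=j^1(h\circ\phi)$: for composable $h,k$ one has $(k\circ h)^1=k^1\circ h^1$, because $j^1((k\circ h)\circ\phi)=j^1(k\circ(h\circ\phi))=k^1(h^1(j^1\phi))$, and $(\mathrm{id}_M)^1=\mathrm{id}_{\jpm}$. Applying these to $h^{-1}$ yields $(h^{-1})^1\circ h^1=(\mathrm{id}_M)^1=\mathrm{id}$ and $h^1\circ(h^{-1})^1=\mathrm{id}$, so $h^1$ is a diffeomorphism with $(h^1)^{-1}=(h^{-1})^1$, each side already smooth by (i). For the product statement I would use the projections $\pi_M,\pi_N$ on $M\times N$ to define $\Psi=(\pi_M^1,\pi_N^1):\jp(M\times N)\to\jp M\times\jp N$, with candidate inverse $(j^1\phi_1,j^1\phi_2)\mapsto j^1(\phi_1,\phi_2)$; verifying these are mutually inverse and smooth is routine in coordinates, since the coordinates of $\jp(M\times N)$ are precisely those of the two factors taken together.

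Finally, for (iii) the cleanest route is functorial rather than computational. Since $M$ is a vector space, addition $s:M\times M\to M$ and scalar multiplication $m_\lambda:M\to M$ are smooth, so by (i) they induce $s^1$ and $m_\lambda^1$; composing $s^1$ with the isomorphism of (ii) produces a well-defined addition on $\jpm$, and $m_\lambda^1$ a scalar multiplication, which unwind to $j^1f+j^1g=j^1(f+g)$ and $\lambda\,j^1f=j^1(\lambda f)$. The vector-space axioms transfer because the corresponding diagrams of smooth maps on $M$ commute and $\jp(-)$ preserves composition and products by (i)--(ii). The coordinate formula $[x;X_\alpha]+\lambda[y;Y_\alpha]=[x+\lambda y;X_\alpha+\lambda Y_\alpha]$ then reads off from the identification in (i): the combined linear map $(a,b)\mapsto a+\lambda b$ has pushforward $(X_\alpha,Y_\alpha)\mapsto X_\alpha+\lambda Y_\alpha$ once we identify each tangent space of $M$ with $M$.

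The step I expect to cause the most friction is the well-definedness in (i): one must keep the two layers of equivalence relation distinct and resist conflating the test functions on $M$ with those on $N$, the whole argument turning on the single fact that $g\circ h$ is an admissible test function on $M$. Everything downstream is then essentially formal, amounting to transporting the smooth structure maps of $M$ through the functor $\jp(-)$.
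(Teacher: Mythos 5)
Your argument is correct, but there is nothing in the paper to compare it against: Theorem \ref{corderoo} is stated in the Preliminaries as a quoted result from Cordero--Dodson--de Le\'on \cite{Cordero} and the paper supplies no proof of it. Your write-up is a sound self-contained substitute. You correctly isolate the one non-formal point, the well-definedness of $h^1$, and your resolution is the right one: since the equivalence on $\cson(\RR^p;M)$ is defined by testing against all $f\in\cson(M)$, and $g\circ h\in\cson(M)$ for every $g\in\cson(N)$, the hypothesis applied to $f=g\circ h$ gives exactly the required equivalence of $h\circ\phi$ and $h\circ\xi$. The chain-rule computation in the charts of \eqref{equation1} establishes smoothness and the formula $h^1([x;X_\alpha])=[h(x);h_*X_\alpha]$, and the functor identities $(k\circ h)^1=k^1\circ h^1$, $(\mathrm{id}_M)^1=\mathrm{id}$ give (ii) cleanly. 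Deriving (iii) by pushing the structure maps $s:M\times M\to M$ and $m_\lambda:M\to M$ through $\jp(-)$ and the product identification of (ii) is a nice touch, since it makes well-definedness of $j^1f+j^1g=j^1(f+g)$ automatic rather than something to check by hand. The only step you wave at that deserves one explicit line is the well-definedness of the candidate inverse $(j^1\phi_1,j^1\phi_2)\mapsto j^1(\phi_1,\phi_2)$ in the product statement; it is indeed immediate from the coordinate description, but it is a claim, not a tautology.
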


\subsection{Second Order Jets: $J_{2p}^2 M$}

Now we consider the $2$- jets by taking classes having equivalence up to all derivatives of second order. The natural atlas of this bundle can be obtained as follows:
\newline

Let $\phi, \phi' \in C^{\infty}(\RR^{2p}, M)$. We say $\phi$ is equivalent to $\phi'$ if 

\begin{eqnarray*}
\phi(0,0)&=&\phi'(0,0) \nonumber \\
\frac{\partial \phi}{\partial \bar{u}_{\bar{\alpha}}}|_{(0,0)}&=& \frac{\partial \phi'}{\partial \bar{u}_{\bar{\alpha}}}|_{(0,0)} \nonumber \\
\frac{\partial ^2 \phi}{\partial \bar{u}_{\bar{\alpha}} \partial \bar{u}_{\bar{\beta}}}|_{(0,0)}&=&\frac{\partial ^2 \phi'}{\partial \bar{u}_{\bar{\alpha}} \partial \bar{u}_{\bar{\beta}}}|_{(0,0)}. \hspace{2cm} 1\leq \bar{\alpha}, \bar{\beta} \leq 2p \nonumber
\end{eqnarray*}

Let $E$ be a finite dimensional real vector space, then $J_{2p}^2 E$ is regarded as a finite dimensional real vector space. Let $L(\RR ^{2p}, E)$ denote the vector space of linear functions $A: \RR ^{2p} \to E$ and let $S_2(\RR ^{2p};E)$ denote the vector space of all symmetric bi-linear functions $B: \RR ^{2p} \times \RR ^{2p} \to E$. The function 

\begin{equation*}
j^2 \phi \rightarrow (\phi(0,0), \frac{\partial \phi}{\partial \bar{u}_{\bar{\alpha}}}|_{(0,0)}, \frac{\partial ^2 \phi }{\partial \bar{u}_{\bar{\alpha}} \partial \bar{u}_{\bar{\beta}}}|_{(0,0)}), \hspace{1.5cm} 1\leq \bar{\alpha},\bar{\beta} \leq 2p 
\end{equation*}

is a canonical isomorphism, where 

\begin{equation*}
\bar{u}_{\bar{\alpha}}=\left\{ \begin{array}{rcl} u_{\alpha} \hspace{1.5cm} \bar{\alpha}=1,2,...,p,\\ w_{\alpha}  \hspace{1cm}  \bar{\alpha}= p+1,....,2p.  \end{array}\right.
 \end{equation*}

We note that we identify $\RR ^{2p}$ with $\RR ^p \times \RR ^p$. Let $\varphi: U \to E$ be a local chart of $M$ that, without loss of generality, we assume maps onto a vector space $E$. Let $(x,A,B) \in J_{2p}^2(E) $; the corresponding 2-jet is the one represented by 

\begin{equation*}
\phi (\bar{u})=x+A.\bar{u}+\frac{1}{2} \bar{u}^T.B.\bar{u}.
\end{equation*}

In terms of curve notations discussed for the first order jets, we may identify $ j^2 \phi \in J^2_{2p}$ with the triple

\begin{equation*}
j^2 \phi=[x;A_{\bar{\alpha}}; B_{\bar{\alpha} \bar{\beta}}] , \hspace{2cm} 1\leq \bar{\alpha}, \bar{\beta} \leq 2p
\end{equation*}

where 
\begin{equation*}
A_{\bar{\alpha}}=\frac{\partial \phi}{\partial \bar{u}_{\bar{\alpha}}}|_{(0,0)}
\end{equation*}
 
and 

\begin{equation*}
B_{\bar{\alpha}\bar{\beta}}=\frac{\partial ^2 \phi }{\partial \bar{u}_{\bar{\alpha}} \partial \bar{u}_{\bar{\beta}}}|_{(0,0)}.
\end{equation*}


\section{Jet Bundle to a Vector Bundle}

Let $\pie :\EE \to M$ be a vector bundle with the local bundle trivialization 

\begin{equation}
\psi:\pie ^{-1}(U) \to U \times E \nonumber
\end{equation}

where $U$ is an open subset of $M$. Then, $\jp \EE$ can be considered as a jet bundle on $\EE$ (the total space of the VB $\pi_{\EE}$). The bundle trivialization on $\jp \EE$ is 

\begin{equation}
\tilde{\psi}:\jp \EE \to \EE \times L(\RR ^p, \RR ^{m+k})   \nonumber
\end{equation}

by 

\begin{equation}
\tilde{\psi}(j^1\Phi)=(\Phi(0), \frac{\partial \Phi}{\partial u_{\alpha}}|_0) \nonumber
\end{equation}

where $\Phi \in C^{\infty}(\RR ^p, \EE)$. Then, $\Phi(0)=(\psi)^{-1}(x,y)$, and 

\begin{equation} 
\frac{\partial \Phi}{\partial u_{\alpha}}|_0=(\frac{\partial (\bar{x}_i \circ \Phi)}{\partial u_{\alpha}}|_0, \frac{\partial (\bar{y}_j \circ \Phi)}{\partial u_{\alpha}}|_0) \nonumber
\end{equation}

where, $\bar{x}_i =  x_i \circ \pie$,  $\bar{y}_j = y_j \circ pr_2 \circ \psi$, and $x_i, y_j$ are the local coordinate functions of $M$ and $E$ respectively. 
\newline

Using curve notation gives:

\begin{equation}
j^1 \Phi \equiv [x,y;X_{\alpha}, Y_{\alpha}]  \nonumber 
\end{equation}

where 

\begin{equation}
X_{\alpha}=\frac{\partial (\bar{x}_i \circ \Phi)}{\partial u_{\alpha}}|_0 \nonumber
\end{equation}

 and 

\begin{equation}
Y_{\alpha}=\frac{\partial (\bar{y}_j \circ \Phi)}{\partial u_{\alpha}}|_0. \nonumber 
\end{equation}

Moreover, for each smooth manifold $M$, $\jpm$ carries a vector bundle structure (see Lemma (\ref{theo1})). Since $\EE$ is a smooth manifold, then $\jpe$ carries a VB structure, where the intrinsic operations are given by the following:
\newline

For a local bundle chart $\tilde{\psi}:(\pie \circ \tilde{\pi})^{-1} \to \pie^{-1}(U) \times L(\RR ^p, \RR^{m+k})$, the mapping $+_1$ and $\bullet _1$  on $(\tilde{\pi})^{-1}\{(x,y)\}$ are defined locally as

\begin{equation}
j^1 \Phi +_1 j^1 \Phi'=(\tilde{\psi})^{-1}((\psi)^{-1}(x,y), (pr_2 \circ \tilde{\psi})(j^1 \Phi) +(pr_2 \circ \tilde{\psi})(j^1 \Phi')) \nonumber
\end{equation}

and

\begin{equation}
\lambda \bullet_1 j^1 \Phi=(\tilde{\psi})^{-1}((\psi)^{-1}(x,y), \lambda . (pr_2 \circ \tilde{\psi})(j^1 \Phi)).\nonumber 
\end{equation}

Using curve notation, one can see that

\begin{equation}
j^1 \Phi+_1 j^1 \Phi '=[x,y;X_{\alpha},Y_{\alpha}]+_1 [x,y;X'_{\alpha}, Y'_{\alpha}]=[x,y; X_{\alpha}+X'_{\alpha}, Y_{\alpha}+Y'_{\alpha}] \nonumber
\end{equation}

and

\begin{equation}
\lambda \bullet_1 [x,y;X_{\alpha}, Y_{\alpha}]=[x,y; \lambda . X_{\alpha}, \lambda . Y_{\alpha}] \nonumber
\end{equation}

where $U$ is an open subset of $M$.

\begin{remark}

Hereafter, we will refer to above vector bundle structure as the primary structure.

\end{remark}

\subsection{Secondary VB Structure on $\jp \EE$}

We recall that there exists an induced a canonical smooth function $\pi^1_{\EE}:\jp \EE \to \jpm$, where $\pi_{\EE}:\EE \to M$ a smooth bundle projection of $\EE $. One can easily prove that $(\pi_{\EE})^1$ is a surjective map. Moreover, it can be seen from Theorem \ref{corderoo} that $\jp E$ is a vector space (isomorphic to $L(\RR ^p, E)$). If we let the local  trivialization of $\EE$ be the map $\psi$, then  $\psi ^1:\jp (\pie ^{-1}(U)) \to \jp (U \times E)$ is a diffeomorphism.
\newline

Now we consider the trivialization domains: 

\begin{lemma} 
 
Let $\pie^{-1}(U)$ be a local trivialization domain of the bundle $\EE$. Then

\begin{equation} 
\jp (\pie^{-1}(U)) = (\pie ^1)^{-1}(\jp U) = (\pie \circ \tilde{\pi})^{-1}(U). \label{triviadomain}
\end{equation}  

\end{lemma}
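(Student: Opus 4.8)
The plan is to show that all three sets coincide with one and the same subset of $\jp\EE$, namely the collection of $1$-jets $j^1\Phi$ whose base point $\Phi(0)=\tilde{\pi}(j^1\Phi)$ lies in $\pie^{-1}(U)$. Once each of the three expressions is identified with this common description $\{\, j^1\Phi : \Phi(0)\in\pie^{-1}(U)\,\}$, the two equalities follow at once. I would dispose of the two right-hand members first, since they reduce directly to the definitions, and treat the leftmost term $\jp(\pie^{-1}(U))$ last, as it is the only one demanding a genuine argument.

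For the rightmost set, recall that the projection $\tilde{\pi}:\jp\EE\to\EE$ sends $j^1\Phi$ to $\Phi(0)$, so by definition $(\pie\circ\tilde{\pi})^{-1}(U)=\{\, j^1\Phi : \pie(\Phi(0))\in U\,\}=\{\, j^1\Phi : \Phi(0)\in\pie^{-1}(U)\,\}$, which is exactly the common description. For the middle preimage I would invoke the functoriality recorded in Theorem~\ref{corderoo}, by which the induced map satisfies $\pie^1(j^1\Phi)=j^1(\pie\circ\Phi)$. Since $\jp U\subseteq\jpm$ is precisely the set of $1$-jets whose base point lies in $U$, the membership $\pie^1(j^1\Phi)\in\jp U$ unwinds to $(\pie\circ\Phi)(0)=\pie(\Phi(0))\in U$, i.e.\ $\Phi(0)\in\pie^{-1}(U)$; hence $(\pie^1)^{-1}(\jp U)$ also agrees with the common description.

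The only delicate point, and the main obstacle, is the identification of $\jp(\pie^{-1}(U))$, the jet bundle of the open submanifold $\pie^{-1}(U)$, with its image inside $\jp\EE$. The inclusion in one direction is immediate: any $1$-jet of a map $\Phi:\RR^p\to\pie^{-1}(U)$ has $\Phi(0)\in\pie^{-1}(U)$. For the reverse inclusion I would use crucially that $\pie^{-1}(U)$ is open in $\EE$: given $j^1\Phi\in\jp\EE$ with $\Phi(0)\in\pie^{-1}(U)$, continuity of $\Phi$ supplies a neighborhood $V$ of $0\in\RR^p$ with $\Phi(V)\subseteq\pie^{-1}(U)$, and restricting $\Phi$ to $V$ produces a map into $\pie^{-1}(U)$ representing the very same $1$-jet, because the equivalence relation defining $1$-jets depends only on the value and the first partial derivatives at the origin. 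Thus $\Phi(0)\in\pie^{-1}(U)$ forces $j^1\Phi$ to lie in $\jp(\pie^{-1}(U))$, which establishes the common description for the leftmost term and completes the chain of equalities. Everything apart from this openness step is a routine unwinding of definitions.
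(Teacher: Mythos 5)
Your proof is correct, and the computations underlying it are the same definition-unwindings the paper uses ($\tilde{\pi}(j^1\Phi)=\Phi(0)$ and $\pie^1(j^1\Phi)=j^1(\pie\circ\Phi)$), but your decomposition is genuinely different. The paper establishes the two equalities by a cycle of three inclusions, $\jp(\pie^{-1}(U))\subset(\pie^1)^{-1}(\jp U)\subset(\pie\circ\tilde{\pi})^{-1}(U)\subset\jp(\pie^{-1}(U))$, each a one-line implication; you instead identify all three sets with the single common description $\{\,j^1\Phi:\Phi(0)\in\pie^{-1}(U)\,\}$, which is cleaner and makes it transparent why the chain closes up. The one substantive divergence is your treatment of the leftmost term: the paper tacitly works with the convention (stated in its appendix) that $\jp U=\{j^1\phi:\phi(0)\in U\}$ for an open set $U$, so its final inclusion is immediate, whereas you read $\jp(\pie^{-1}(U))$ as the jet bundle of the open submanifold and supply the missing openness-and-restriction argument. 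That extra step is the right instinct and is where the real content lies if one takes the submanifold reading seriously; the only small technical wrinkle is that, since the paper's jets are classes of maps defined on all of $\RR^p$, the restriction $\Phi|_V$ is not literally an admissible representative --- you would precompose with a diffeomorphism of $\RR^p$ onto $V$ whose $1$-jet at $0$ is the identity (or otherwise modify $\Phi$ off a neighborhood of $0$) to get a genuine element of $C^{\infty}(\RR^p,\pie^{-1}(U))$ with the same $1$-jet. This is routine and does not affect the validity of your argument.
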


\begin{proof}

To prove equation (\ref{triviadomain}), we will prove followings:
\newline 

\begin{enumerate}[(i)]
\item $\jp (\pie^{-1}(U)) \subset (\pie ^1)^{-1}(\jp U)$, 
\newline

\item $(\pie ^1)^{-1}(\jp U) \subset (\pie \circ \tilde{\pi})^{-1}(U)$, and 
\newline

\item $(\pie \circ \tilde{\pi})^{-1}(U) \subset \jp (\pie^{-1}(U))$.
\newline
\end{enumerate}

Suppose that 

\begin{equation}
j^1\phi \in \jp (\pie ^{-1}(U)). \nonumber
\end{equation} 

\noindent By its own definition, $\phi (0) \in \pie ^{-1}(U)$, then 

\begin{eqnarray}
\pie (\phi(0))\in U &\Rightarrow & j^1(\pie \circ \Phi) \in \jp U \nonumber \\ 
                    & \Rightarrow & \pie ^1(j^1 \Phi) \in \jp U \nonumber \\
                    &\Rightarrow & j^1 \phi \in (\pie ^1)^{-1}(\jp U) \nonumber 										
\end{eqnarray}										

Then $\jp ((\pie^1)^{-1}(U)) \subset (\pie ^1)^{-1}(\jp U)$. The first statement is proven.
\newline

Suppose that $j^1 \Phi \in (\pie ^1)^{-1}(\jp U).$ Then

\begin{eqnarray*}
\pie^1 (j^1 \Phi) \in \jp U &\Rightarrow &j^1 (\pie \circ \Phi) \in \jp U \\
                           &\Rightarrow & (\pie \circ \Phi)(0) \in U \\
													 & \Rightarrow & \Phi(0) \in (\pie)^{-1}(U) \\
													 &\Rightarrow & \tilde{\pi}^{-1}(\Phi(0)) \in \tilde{\pi}^{-1} (\pie ^{-1} (U)) \\
													 &\Rightarrow & j^1 \Phi \in (\pie \circ \tilde{\pi})^{-1}(U).
\end{eqnarray*}

Then  $(\pie ^1)^{-1}(\jp U) \subset (\pie \circ \tilde{\pi})^{-1}(U)$. The second statement is proven.

On the other hand, suppose that $j^1 \Phi \in (\pie \circ \tilde{\pi})^{-1}(U).$ Then

\begin{eqnarray}
(\pie \circ \tilde{\pi})(j^1 \Phi) \in U &\Rightarrow & (\pie (\tilde{\pi}(j^1 \Phi))) \in U  \nonumber \\
                                               &\Rightarrow & \pie(\Phi (0)) \in U \nonumber \\
																							 & \Rightarrow & \Phi (0) \in \pie^{-1} (U) \nonumber \\
																							 & \Rightarrow & j^1 \Phi \in \jp (\pie^{-1}(U)) \nonumber 
\end{eqnarray}

which shows that $(\pie \circ \tilde{\pi})^{-1}(U) \subset \jp (\pie^{-1}(U)). $ This completes the proof.

\end{proof}

\begin{proposition} \label{prop1}

Let $\pie :\EE \to M$ be a vector bundle. Then $\pie ^1 :\jpe \to \jpm$ is a vector bundle so that the manifold $\jpe $ has two vector bundle structures, namely , its primary vector bundle structure as the first jet bundle of manifold $\EE $, and a secondary structure with $\jpm $ as the base manifold. Moreover the induced charts on $\jpe$ from primary and secondary structures belong to the same atlas.

\end{proposition}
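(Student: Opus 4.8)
The plan is to manufacture the secondary structure by pushing the local trivialization of $\EE$ through the jet functor, and then to settle the last assertion by displaying both structures in one coordinate system.

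First I would trivialize $\pie^1$ over $\jp U$. By the preceding Lemma (equation (\ref{triviadomain})) we have $(\pie^1)^{-1}(\jp U)=\jp(\pie^{-1}(U))$, so it is enough to work there. Applying Theorem \ref{corderoo}(ii) to the trivializing diffeomorphism $\psi:\pie^{-1}(U)\to U\times E$ gives a diffeomorphism $\psi^1:\jp(\pie^{-1}(U))\to\jp(U\times E)$, and the same theorem identifies $\jp(U\times E)$ with $\jp U\times\jp E$. Since $pr_1\circ\psi=\pie$ on the trivializing set, functoriality of the jet construction ($(pr_1)^1\circ\psi^1=\pie^1$ after the product identification) shows that $\pie^1$ is carried to the projection $\jp U\times\jp E\to\jp U$. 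The typical fibre is therefore $\jp E$, a vector space by Theorem \ref{corderoo}(iii); transporting its operations back through $\psi^1$ defines the fibrewise sum $+_2$ and scalar action $\bullet_2$, giving a local vector bundle chart for $\pie^1$.

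The step I expect to be the main obstacle is showing that these fibre operations do not depend on the chosen trivialization, i.e. that the secondary transition maps are linear on $\jp E$. If $\psi_V\circ\psi_U^{-1}(x,e)=(x,g(x)e)$ with $g:U\cap V\to GL(E)$ the transition cocycle of $\EE$, then the induced transition on $\jp(U\cap V)\times\jp E$ leaves the base jet $j^1\gamma$ unchanged and sends the fibre part to the $1$-jet of $u\mapsto g(\gamma(u))e(u)$, whose value and $\alpha$-th derivative at $0$ are $g(\gamma(0))e(0)$ and $Dg(\gamma(0))[X_\alpha]\,e(0)+g(\gamma(0))\,\partial e/\partial u_\alpha|_0$. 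For a fixed base point $j^1\gamma$ this is \emph{linear} in the jet datum $(e(0),\partial e/\partial u_\alpha|_0)\in\jp E$, so the transitions are fibrewise linear and $\pie^1:\jpe\to\jpm$ is a genuine vector bundle, the secondary structure. (Equivalently one may apply the jet functor to the fibrewise addition $\EE\times_M\EE\to\EE$ and to scalar multiplication and read off the same operations from $\jp(\EE\times_M\EE)\cong\jpe\times_{\jpm}\jpe$, with the bundle axioms inherited automatically.)

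Finally, for the atlas statement I would compare the two families of induced charts on the common domain $\jp(\pie^{-1}(U))$. Writing a point as $[x,y;X_\alpha,Y_\alpha]$ as in Section 3, both constructions produce exactly the coordinate functions $(x_i,y_j,X_\alpha^i,Y_\alpha^j)$ with $1\le i\le m$, $1\le j\le k$, $1\le\alpha\le p$; the primary chart (over $\EE$) groups them as base $(x_i,y_j)$ and fibre $(X_\alpha^i,Y_\alpha^j)$, whereas the secondary chart (over $\jpm$) groups them as base $(x_i,X_\alpha^i)$ and fibre $(y_j,Y_\alpha^j)$. Hence the two chart maps into $\RR^{(m+k)(p+1)}$ differ only by a permutation of coordinate slots, a linear diffeomorphism, and since the underlying coordinate functions are literally the same smooth functions the two charts are $\cson$-compatible. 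Composing with the smooth transition cocycle of $\EE$ on overlaps shows that all primary and secondary charts are mutually compatible, so they lie in a single maximal atlas, which is the claim.
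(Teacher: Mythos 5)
Your proposal is correct and follows essentially the same route as the paper: the secondary trivialization is $\psi^1$ composed with the identification $\jp (U\times E)\cong \jp U\times \jp E$, the typical fibre is the vector space $\jp E$, the operations $+_2$ and $\bullet_2$ are obtained by transporting the vector space structure of $\jp E$ back through $\psi^1$, and the atlas claim reduces to the observation that the primary and secondary charts differ by the slot permutation $(x,f,y,g)\mapsto (x,y,f,g)$, which is exactly the local form $\hat{\Omega}$ of the identity map computed in Lemma \ref{lemmaomega}. The one place where you go beyond the paper is your explicit check that the secondary transition functions are fibrewise linear on $\jp E$, via the cocycle $g:U\cap V\to GL(E)$ and the product-rule formula $Dg(\gamma(0))[X_\alpha]\,e(0)+g(\gamma(0))\,\partial e/\partial u_\alpha|_0$; the paper instead only verifies that $\psi^1$ restricted to a single fibre is linear for the transported operations, which holds by construction, so your added step is the substantive one ensuring the fibrewise vector space structure is independent of the chosen trivialization.
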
 

\begin{proof}

First, we start the proof by showing that $\psi^1$  is the bundle trivialization of the secondary jet bundle $\jpe $:
\newline

Since $Pr_1 \circ \psi ^1 =\pie ^1$ , then  $\pie ^1: \jpe \to \jpm $ is a smooth fiber bundle. (Here $Pr_1:\jp U \times \jp E \to \jp U$ represents the first projection. 
We note that $\jp (U \times E)$ and $\jp U \times \jp E $ are diffeomorphic by setting $j^1 f \cong (j^1 (f_1), j^1(f_2))$, where $f=(f_1, f_2):\RR ^p \to U \times E$.)
\newline

Now we  identify $j^1\Phi \in \jpe$ as quadruple $[x,X_{\alpha};y,Y_{\alpha}]$ such that

\begin{equation}
X_{\alpha}=\frac{\partial (\pie \circ \Phi)}{\partial u_{\alpha}}|_0  \label{secondary1}
\end{equation}

and

\begin{equation}
Y_{\alpha}=\frac{\partial (pr_2 \circ \psi \circ \Phi)}{\partial u_{\alpha}}|_0.  \label{secondary2} 
\end{equation}

Showing that the fiber map being linear, proves that $\pie^1$ is a vector bundle over $\jpm$. 


Let $j^1 \Phi, j^1 \Phi ' \in (\pie ^1) ^{-1} \{ j^1 \theta \}$, for each $ j^1 \theta =[x;X_{\alpha}] \in \jpm $. Then  $j^1\Phi =[x,X_{\alpha};y,Y_{\alpha}]$  for $\alpha=1,2,...,p$, and $j^1\Phi' =[x,X_{\alpha};y',Y'_{\alpha}]$. The secondary vector bundle operations on this fiber are defined by:

\begin{eqnarray}
j^1 \Phi +_{_2}  j^1 \Phi '&=&(\psi^1)^{-1}(j^1 \theta, (pr_2 \circ \psi^1)(j^1\Phi)+(j^1 \theta, pr_2 \circ \psi^1)(j^1\Phi')) \nonumber \\
                           &=&[x,X_{\alpha};y+y',Y_{\alpha}+Y'_{\alpha}] \nonumber
\end{eqnarray}

and

\begin{eqnarray}
\lambda \bullet_{_2} j^1\Phi&=&(\psi^1)^{-1}(j^1\theta, \lambda. (pr_2 \circ \psi^1)(j^1\Phi)) \nonumber \\
                            &=& [x,X_{\alpha};\lambda y,\lambda Y_{\alpha}] \nonumber
\end{eqnarray}

where $j^1 \Phi,j^1 \Phi ' \in (\pie ^1)^{-1}\{j^1\theta\}$, and $\lambda \in \RR$. Thus,

\begin{eqnarray}
\psi^1_{[x,X_{\alpha}]}([x,X_{\alpha};y,Y_{\alpha}]+_{_2}\lambda \bullet_{_2} [x,X_{\alpha};y',Y'_{\alpha}]&=&\psi^1 ([x,X_{\alpha};y+\lambda y',Y_{\alpha}+\lambda Y'_{\alpha}])\nonumber \\
                                                                 &=&[y+\lambda y',Y_{\alpha}+\lambda Y'_{\alpha}] \nonumber \\
                                                                 &=&[y,Y_{\alpha}]+_{\EE} \lambda \bullet_{\EE} [y',Y'_{\alpha}] \nonumber																																 \end{eqnarray}

which equals to 
\begin{equation*}
\psi^1_{[x,X_{\alpha}]}([x,X_{\alpha}; y,Y_{\alpha}])+_2 \lambda \bullet_{\EE} \psi^1_{[x,X_{\alpha}]}([x,X_{\alpha}; y', Y'{\alpha}]) \nonumber
\end{equation*}                                                               

This shows that $\psi^1_{[x,X_{\alpha}]}$ is a linear function. Therefore, $\jpe $ is a vector bundle with its secondary structure.
\newline

By now,  we have shown that $\jpe $ carries two vector bundle structures (namely primary and secondary). One can easily see that these two structures define coordinate charts on the total space $\jpe $. Now we will show that these two charts belong to the same atlas. To do this we will show that identity map of $\jpe $ is a diffeomorphism between two VB structures based on $\jpm $ and $\EE $. 
\newline

Let $\EE $ has the local coordinate maps $\bar{x}_i, \bar{y}_j$ with $1\leq i \leq m,1\leq j \leq k$. Considering the primary structure on $\jpe$: For all $j^1\phi \in \jpe $, there exists triples $(x,y,\rho)$ with $\Phi(0)=\psi^{-1}(x,y)$ and $\rho:\RR ^p \to \RR ^{m+k}$ which is a linear function that corresponds to the matrix $A=\begin{bmatrix}\frac{\partial \Phi_{\bar{a}}}{\partial u^{\alpha}}|_0\end{bmatrix}$. Here the term $\Phi_{\bar{a}}$ is defined by 

\begin{equation*}
\Phi_{\bar{a}}=\left\{ \begin{array}{rcl} (\bar{x}_i \circ \Phi) \hspace{4cm} \bar{a}=1,2,...,m,\\ (\bar{y}_j \circ \Phi)\hspace{2.9cm} \bar{a}=m+1,....,m+k.  \end{array}\right.
\end{equation*}

Therefore the matrix $A$ consists of two sub matrices: they are

\begin{equation}
\begin{bmatrix}\frac{\partial (\bar{x}_i \circ \Phi)}{\partial u^{\alpha}}|_0\end{bmatrix} \hspace{2cm} and \hspace{2cm} \begin{bmatrix}\frac{\partial (\bar{y}_j \circ \Phi)}{\partial u^{\alpha}}|_0\end{bmatrix}.\label{matrix}
\end{equation}

On the other hand, considering secondary structure on $\jpe $,  given any $j^1\Phi \in \jpe $, there exists quadruple $(x,f;y,g)$, where $\Phi(0)=(x,y)$,  $f=\begin{bmatrix}\frac{\partial (\bar{x}_i \circ \Phi)}{\partial u^{\alpha}}|_0\end{bmatrix}$ and $g=\begin{bmatrix}\frac{\partial (\bar{y}_j \circ \Phi)}{\partial u^{\alpha}}|_0\end{bmatrix}.$
\newline
 
It is clear that the the matrix representations of $f$ and $g$ are defined the same as in Equation \ref{matrix}. Let $\Omega$ be the identity map of $\jpe $, $\Psi_{\EE}$, $\Psi_M$ and $\Psi_E$ be the local trivializations of jet bundles $\jpe,\jpm$ and $\jp E$ respectively , and $\varphi: U \subset M \to \RR ^m$ be a coordinate chart. Then the local form of $\Omega$ is given by the following commutative diagram:

\begin{equation*}
\begindc{\commdiag}
\obj(1,3)[A]{$(\pie^1)^{-1}(\jp U) $}
\obj(1,1)[C] {$\varphi(U) \times L(\RR ^p, \RR ^m) \times E \times L(\RR ^p, \RR ^k) $}
\obj(10,3)[B] {$(\pie \circ \tilde{\pi})^{-1} (U) $} 
\obj(10,1)[D] {$ \varphi(U) \times E \times L(\RR ^p,\RR ^m) \times L(\RR^p, \RR^k) $}
\mor{A}{C}{$\varphi_2$}
\mor{B}{D}{$\varphi_1$}
\mor{A}{B}{$\Omega$}
\mor{C}{D}{$\hat{\Omega}$}
\enddc
\end{equation*}

where 

\begin{equation*}
\varphi_1=(\varphi \times id_{E \times L(\RR^p, \RR^m) \times L(\RR ^p, \RR^k)}) \circ (\psi \times \xi) \circ \tilde{\psi},
\end{equation*} 

and

\begin{equation*}
\varphi_2=(\varphi \times id_{L(\RR^p, \RR^m) \times E \times L(\RR ^p, \RR^k) }) \circ (\Psi_M \times \Psi_E) \circ \psi^1
\end{equation*}

and 

\begin{eqnarray}
\xi:&L(\RR^p, \RR^{m+k}) \to& L(\RR^p, \RR^m) \times L(\RR^p, \RR^k) \nonumber \\
    &f_g  \to        & \xi(f_g)=(f, g) \nonumber 
\end{eqnarray}

where  $f_g (u)=(f(u),g(u))$.
\newline

\noindent From Lemma \ref{lemmaomega}, the local form $\hat{\Omega}$ is $(x,f,y,g) \to (x,y,f,g)$. It is clear that the local form is a surjective map. Since  $\hat{\Omega}=pr_1 \times pr_3 \times pr_2 \times pr_4$, then the local form is differentiable with its own inverse. Therefore $\hat{\Omega}$ is a diffeomorphism which implies that two structure charts belong to the same atlas. 

\end{proof}

\begin{remark}
The secondary structure defined on tangent bundles can be found in \cite{Fisher}.
\end{remark}


\subsection{ $\jp (\jpm) $ as a Quotient Manifold}

Let us consider the case $\EE = \jp M$ for the smooth manifold $M$. By Lemma \ref{theo1}, $\jp M$ is a vector bundle. By Proposition \ref{prop1}, $\jp (\jp M)$ has two vector bundle structures  both based on $\jpm$. To define the double jet manifold, we begin with the idea of smooth functions on $\jpm$. 
\newline

Let $\phi: \RR ^p \times \RR ^p \to M $ be a smooth function. We define 

\begin{eqnarray}
\Phi &: \RR ^p &\to \jpm \nonumber \\
     & u       & \rightarrow \Phi (u)=\jp (\phi_u) \label{Phi} 
\end{eqnarray}

where $\phi_u$ is defined by 

\begin{eqnarray*}
\phi_u&:\RR ^p &\to M \nonumber \\
      & v     & \rightarrow \phi_u (v)=\phi(u,v) 
\end{eqnarray*}

It can easily be seen that $\phi_u$ is a smooth function.
\newline

$$
\begindc{\commdiag}
\obj(1,4)[A]{$\RR ^p$}
\obj(5,4) [B] {$\jpm$}
\obj(5,1) [C] {$ M\times L(\RR ^p , \RR ^m )$}
\mor{A}{B}{$\Phi$}
\mor{A}{C}{$\psi \circ \Phi$}
\mor{B}{C}{$\psi$}
\enddc
$$

Here 
\begin{eqnarray*}
(\psi \circ \Phi)(u)&=&(\phi_u (0), \frac{\partial \phi_u}{\partial w_{\alpha}}|_{w=0}), \hspace{2cm} u\in \RR^p. \nonumber \\
                    &=&(\phi(u,0), \frac{\partial \phi_u}{\partial w_{\alpha}}|_{w=0})
\end{eqnarray*}

is a smooth function. Therefore, $\Phi \in C^{\infty}(\RR^p, \jpm).$ 

\begin{proposition} \label{lambda}

Let $\Phi$ and $\phi$ be the functions defined by Equation \ref{Phi}. Then $\jp (\jp M)$ is a quotient of $J_{2p}^2 M$, with the quotient map $\Lambda$ defined by the following: 

\begin{eqnarray*}
\Lambda:&  J_{2p}^2 (M) \to &\jp (\jpm) \nonumber \\
        &J^2 \phi \rightarrow & J^1 \Phi.
\end{eqnarray*}

\end{proposition}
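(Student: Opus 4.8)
The plan is to realize $\Lambda$ as a surjective submersion and then invoke the universal property of surjective submersions, which guarantees that $\jp(\jpm)$ carries exactly the quotient smooth structure determined by $\Lambda$. First I would fix a chart $\varphi:U\to E$ of $M$ onto a vector space, identify $\RR^{2p}=\RR^p\times\RR^p$ with coordinates $(u,w)$ as in the preliminaries (so $\bar u_{\bar\alpha}=u_\alpha$ for $\bar\alpha\le p$ and $\bar u_{\bar\alpha}=w_\alpha$ for $\bar\alpha> p$), and express everything in the induced jet coordinates. Since $\psi\circ\Phi$ was already shown to be smooth, $\Phi\in C^\infty(\RR^p,\jpm)$ and $\jp\Phi$ is defined; the whole argument then reduces to computing $\Lambda$ in coordinates and reading off its structure.

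Next I would compute. For $\Phi(u)=\jp(\phi_u)$ with $\phi_u(w)=\phi(u,w)$, the induced chart on $\jpm$ gives $x_i(\Phi(u))=x_i(\phi(u,0))$ and $x^i_\alpha(\Phi(u))=\partial(x_i\circ\phi)/\partial w_\alpha|_{(u,0)}$. Differentiating once more in $u$ at the origin, the four blocks of coordinates of $\jp\Phi\in\jp(\jpm)$ become
\begin{equation*}
x_i(\phi(0,0)),\qquad \frac{\partial(x_i\circ\phi)}{\partial w_\alpha}\Big|_0,\qquad \frac{\partial(x_i\circ\phi)}{\partial u_\beta}\Big|_0,\qquad \frac{\partial^2(x_i\circ\phi)}{\partial u_\beta\,\partial w_\alpha}\Big|_0.
\end{equation*}
Every entry is a component of the $2$-jet data $(x;A_{\bar\alpha};B_{\bar\alpha\bar\beta})$ of $\phi$, so $\jp\Phi$ depends only on $\j2\phi$: if $\j2\phi=\j2\phi'$ then all displayed coordinates agree, hence $\jp\Phi=\jp\Phi'$, which is precisely the well-definedness of $\Lambda$. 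Moreover the local form of $\Lambda$ is the linear map that retains the value, both blocks of first partials $A_\beta$ and $A_{p+\alpha}$, and the mixed second partials $B_{\beta,\,p+\alpha}$, while discarding the two ``pure'' blocks $B_{\alpha\beta}$ and $B_{p+\alpha,\,p+\beta}$ of second partials lying inside a single copy of $\RR^p$.

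From this local form the remaining claims follow quickly. Surjectivity is obtained by explicit realization: for a prescribed target with value $c$, $w$-partials $p_\alpha$, $u$-partials $q_\beta$ and mixed partials $r_{\beta\alpha}$, the polynomial $\phi(u,w)=c+\sum_\beta q_\beta u_\beta+\sum_\alpha p_\alpha w_\alpha+\sum_{\alpha,\beta} r_{\beta\alpha}\,u_\beta w_\alpha$ (read in the chart $E$) has vanishing pure second partials and exactly the prescribed lower-order data, so $\Lambda(\j2\phi)$ is the chosen point. Because $\Lambda$ is locally a coordinate projection it is smooth with everywhere surjective differential, i.e.\ a surjective submersion; a dimension count $\dim\j2p M-\dim\jp(\jpm)=mp(p+1)$ confirms that its fibres are the affine subspaces on which only the two pure second-partial blocks are free. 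The universal property of surjective submersions then identifies $\jp(\jpm)$ with $\j2p M/\!\sim$, where $\j2\phi\sim\j2\phi'$ exactly when the two $2$-jets share the same value, first partials and mixed second partials. I expect the only genuinely delicate point to be the bookkeeping in the coordinate computation — in particular verifying that the second-order information surviving in $\jp\Phi$ is exactly the mixed block $\partial^2/\partial u_\beta\,\partial w_\alpha$ and that the symmetry of $B$ produces no double counting; once the local form is pinned down, smoothness, surjectivity and the submersion property are routine.
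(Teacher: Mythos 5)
Your proposal is correct and follows essentially the same route as the paper: both compute the local form of $\Lambda$ in jet coordinates, find that it is the linear projection $(x,A,B)\mapsto(x,A_{\dot\alpha},A_\alpha,B_{\alpha\dot\beta})$ retaining the value, both first-partial blocks and the mixed second partials, and conclude that $\Lambda$ is a surjective submersion. Your inline well-definedness observation (the paper relegates this to an appendix lemma) and your explicit polynomial witness for surjectivity are harmless refinements of the same argument.
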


\begin{proof}
 
Let $\psi$ be a local trivialization of the vector bundle $\jpm $, then considering local charts for $J_{2p} ^2 M $, and  $\jp (\jp M)$, we have the commutative diagram:
 
\begin{equation} \label{lambdatilde}
\begindc{\commdiag} 
\obj(1,4) [A] {$ J_{2p}^2 M $}
\obj(10,4) [B] {$\jp (\jp M)$}
\obj(1,1) [C] {$ M\times L(\RR ^{2p} , \RR ^m ) \times S_2(\RR ^{2p}, \RR ^m)$}
\obj(10,1) [D] {$ M \times L(\RR ^p , \RR ^m) \times L(\RR ^p,\RR ^m) \times L(\RR ^p, \RR ^{mp})$}
\mor{A}{B} {$\Lambda $} 
\mor{A}{C} {$\psi ^2$}
\mor{B}{D} {$\bar{\psi}  \circ \psi ^1=\widetilde{\psi} $}
\mor{C}{D} {$\hat{\Lambda}$} 
\enddc
\end{equation}
\newline 

\noindent where $\bar{\psi}$ denotes the local trivialization of $\jp (M\times L(\RR ^p, \RR ^m))$, and $\psi^2$ stands for the local trivialization of $J^2_{2p} M$.  We note that due to Proposition \ref{prop1}, it is also possible to use the induced chart of primary structure. 
\newline 

For any $A \in L(\RR ^{2p}, \RR ^m)$, we can identify  $A$ in terms of the sub matrices of form  $m \times p$ with $A=[A_{\alpha} A_{\dot{\alpha}}]$ where $A_{\alpha} , A_{\dot{\alpha}}\in L(\RR ^p , \RR ^m)$. Similarly, for any $B \in S_2(\RR ^{2p}, \RR ^m)$, we can identify $B$ as the sub matrices 

$$B_{\bar{\alpha}\bar{\beta}}=\begin{pmatrix}
B_{\alpha\beta} & B_{\dot{\alpha}\beta} \\
B_{\alpha \dot{\beta}} & B_{\dot{\alpha}\dot{\beta}}
\end{pmatrix}.
$$

Suppose that $(x,A,B) \in M \times L(\RR ^{2p}, \RR ^m)$. Let $(\psi^2)^{-1}(x,A,B)=j^2\phi$ with the following properties:
\newline

\begin{equation*}
x=\phi(0,0),\hspace{3mm} A=\frac{\partial \phi}{\partial \bar{u}_{\alpha}}|_{(0,0)},\hspace{3mm} [B_{\alpha \beta}]=\frac{\partial ^2 \phi}{\partial \bar{u}_{\alpha} \bar{u}_{\beta}}|_{(0,0)}. 
\end{equation*}

Therefore the sub matrices $A_{\alpha}$, $A_{\dot{\alpha}}$, $B_{\alpha \beta}$, $B_{\dot{\alpha} \beta}$, $B_{\alpha \dot{\beta}}$ and $B_{\dot{\alpha} \dot{\beta}}$ are

\begin{equation*}
A_{\alpha}=\frac{\partial \phi}{\partial u_{\alpha}}|_{(0,0)} \hspace{1cm} A_{\dot{\alpha}}=\frac{\partial \phi}{\partial w_{\alpha}}|_{(0,0)}
\end{equation*}

and

\begin{equation*} \nonumber
[B_{\alpha \beta}]=\frac{\partial ^2 \phi}{\partial u_{\alpha}u_{\beta}} \hspace{1cm} 
[B_{\dot{\alpha}\beta}]=\frac{\partial ^2 \phi}{\partial w_{\alpha}u_{\beta}} \hspace{1cm} 
[B_{\alpha \dot{\beta}}]=\frac{\partial ^2 \phi}{\partial u_{\alpha}w_{\beta}} \hspace{1cm} 
[B_{\dot{\alpha}\dot{\beta}}]=\frac{\partial ^2 \phi}{\partial w_{\alpha}w_{\beta}}. 
\end{equation*}

Then,

\begin{eqnarray} 
\hat{\Lambda}(x,A,B)&=&((\bar{\psi}\circ \psi^1) \circ \Lambda \circ (\psi^2)^{-1})(x,A,B) \nonumber \\
                    &=&((\bar{\psi} \circ \psi^1) \circ \Lambda)(j^2(\phi)) \nonumber \\
										&=& (\bar{\psi} \circ \psi^1) (j^1 \Phi) \nonumber \\
										&=& \bar{\psi}(j^1(\psi \circ \Phi)) \nonumber \\
										&=& ((\psi \circ \Phi)(0), \frac{\partial (\psi \circ \Phi)}{\partial u^{\alpha}}|_0 ).\label{lamda}									
\end{eqnarray}

From the last line of Equation \ref{lamda}, one can see that 

\begin{equation}
(\psi \circ \Phi)(0)=\psi(j^1 (\phi_0))=(\phi(0,0), \frac{\partial \phi(0,w)}{\partial w^{\alpha}}|_{w=0}) \in M\times L(\RR ^p, \RR ^m), \nonumber
\end{equation}

and 

\begin{equation}
\frac{\partial (\psi \circ \Phi)}{\partial u^{\alpha}}|_0=(\frac{\partial \phi (u,0)}{\partial u^{\alpha}}|_{u=0}, \frac{\partial ^2 \phi}{\partial u^{\alpha} \partial w^{\beta}}|_{(0,0)})  \in L(\RR^p, \RR ^m) \times L(\RR ^p, \RR^{mp}). \nonumber 
\end{equation}

Continuing to Equation (\ref{lamda}), we have

\begin{eqnarray*}
\hat{\Lambda}(x,A,B)&=&(\phi(0,0), \frac{\partial \phi(0,w)}{\partial w^{\alpha}}|_{w=0}, (\frac{\partial \phi (u,0)}{\partial u^{\alpha}}|_{u=0}, \frac{\partial ^2 \phi}{\partial u^{\alpha} \partial w^{\beta}}|_{u=w=0}) \nonumber \\
                    &=&(\phi(0,0),\frac{\partial \phi(u,w)}{\partial w^{\alpha}}|_{(u,w)=(0,0)}, \frac{\partial}{\partial u^{\alpha}}|_{u=0} (\phi(u,0), \frac{\partial \phi}{\partial w^{\beta}}|_{w=0}) \nonumber \\
										&=& (x, A_{\dot{\alpha}}, A_{\alpha}, B_{\alpha\dot{\beta}}).
\end{eqnarray*}

The map $(A,B)\to (x, A_{\dot{\alpha}}, A_{\alpha}, B_{\alpha\dot{\beta}})$ is a surjective linear map. It follows immediately that $\hat{\Lambda}$ is a submersion thus $\Lambda$ is.  

\end{proof}

\begin{remark} \label{remark1}

Using curve notation on each $j^2 \phi$ and $j^1 \Phi$ in $J^2_{2p} M$ and $\jp (\jp M)$ respectively are as follows:

\begin{equation*}
j^2 \phi=[x;[A_{\alpha} A_{\dot{\alpha}}]; B_{\bar{\alpha} \bar{\beta}}], 
\end{equation*}

considering the primary structure on $\jp (\jp M)$ we have

\begin{equation*}
j^1 \Phi =[(x,X_{\alpha}); (Y_{\alpha}, C_{\alpha \beta})],
\end{equation*}

where $(x,X_{\alpha})=\Phi(0)$, and $(Y_{\alpha}, C_{\alpha \beta})=\frac{\partial \Phi}{\partial u_{\alpha}}|_0$. 
\newline
  
Considering secondary structure on $\jp (\jp M)$, we have

\begin{equation*}
j^1 \Phi=[(x,X'_{\alpha}); (Y'_{\alpha}, C'_{\alpha \beta})] 
\end{equation*}
 
It follows from equations (\ref{secondary1}) and (\ref{secondary2}) that $X'_{\alpha}$, $Y'_{\alpha}$, and $C'_{\alpha \beta}$ are defined by the following:

\begin{equation} \nonumber
X'_{\alpha}=\frac{\partial (\pi \circ \Phi)}{\partial u_{\alpha}} , \hspace{2cm} Y'_{\alpha}=(pr_2 \circ \psi \circ \Phi)(0)=\frac{\partial \phi_0}{\partial u_{\alpha}}|_0, 
\end{equation} 

and 

\begin{equation} \nonumber
 C'_{\alpha \beta}= \frac{\partial (pr_2 \circ \psi \circ \Phi)}{\partial u_{\alpha}}|_0 = \frac{\partial ^2 \Phi}{\partial u_{\alpha} \partial w_{\beta}}|_0
\end{equation}

The local form of $\Lambda$ follows immediately that the curve notation (discussed in preliminaries section) is as following:

\begin{equation*}
\Lambda ([x;[A_{\alpha} A_{\dot{\alpha}}]; B_{\bar{\alpha}\bar{\beta}}])=[x, A_{\dot{\alpha}};A_{\alpha},B_{\alpha \dot{\beta}}] 
\end{equation*}

\end{remark}

Now we shall define an involution on second order jet bundles that will help us to define the involution of $\jp(\jp M)$.

\begin{theorem} \label{lemma4} 

There exist a canonical involution $\hat{\ell}: J^2_{2p} M \to J^2_{2p} M $ that descends via $\Lambda$ to $\jp(\jp M)$ giving the involution $\ell: \jp(\jp M) \to \jp(\jp M)$.

\end{theorem}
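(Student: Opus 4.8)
The plan is to produce $\hat{\ell}$ from the most obvious symmetry available on $\j2p M$: the interchange of the two factors of $\RR^{2p}=\RR^p\times\RR^p$. Concretely, let $s:\RR^{2p}\to\RR^{2p}$ be the linear swap $s(u,w)=(w,u)$, and define
\[
\hat{\ell}(\j2\phi)=\j2(\phi\circ s),\qquad \phi\in\cson(\RR^{2p},M).
\]
Since $s$ is a fixed diffeomorphism fixing the origin, precomposition descends to a well-defined, smooth operation on $2$-jets by the chain rule (the $2$-jet of $\phi\circ s$ at $0$ depends only on the $2$-jet of $\phi$ at $s(0)=0$), exactly the functoriality recorded in Theorem \ref{corderoo}. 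Because $s\circ s=\mathrm{id}$, we get $\hat{\ell}\circ\hat{\ell}=\mathrm{id}$, so $\hat{\ell}$ is a canonical involution.

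First I would record the coordinate action of $\hat{\ell}$ using the normal form $\phi(\bar{u})=x+A\,\bar{u}+\tfrac12\,\bar{u}^{T}B\,\bar{u}$. Writing $s$ as the block matrix $\left(\begin{smallmatrix}0&I\\ I&0\end{smallmatrix}\right)$, one has $\phi\circ s=x+(As)\bar{u}+\tfrac12\,\bar{u}^{T}(s^{T}Bs)\bar{u}$, so in the block notation of Proposition \ref{lambda} the linear part transforms as $[A_{\alpha}\ A_{\dot{\alpha}}]\mapsto[A_{\dot{\alpha}}\ A_{\alpha}]$ and the Hessian blocks as $B_{\alpha\beta}\leftrightarrow B_{\dot{\alpha}\dot{\beta}}$, $B_{\alpha\dot{\beta}}\mapsto B_{\dot{\alpha}\beta}=B_{\beta\dot{\alpha}}$. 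Thus $\hat{\ell}$ swaps $A_{\alpha}$ with $A_{\dot{\alpha}}$, interchanges the two pure second-order blocks, and transposes the mixed block.

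The heart of the argument is the descent along $\Lambda$. From Remark \ref{remark1}, $\Lambda$ retains exactly the data $(x,A_{\dot{\alpha}},A_{\alpha},B_{\alpha\dot{\beta}})$ and forgets precisely the two pure blocks $B_{\alpha\beta}$ and $B_{\dot{\alpha}\dot{\beta}}$; that is, two $2$-jets have the same image under $\Lambda$ iff they agree in $x$, in $A_{\alpha},A_{\dot{\alpha}}$, and in the mixed block $B_{\alpha\dot{\beta}}$. I would then observe that $\hat{\ell}$ moves the forgotten data only among itself: it merely interchanges $B_{\alpha\beta}$ and $B_{\dot{\alpha}\dot{\beta}}$, while the retained data are sent to retained data (the swapped linear blocks and the transposed mixed block). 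Hence $\Lambda\circ\hat{\ell}$ is constant on the fibers of $\Lambda$, and there is a unique set map $\ell:\jp(\jp M)\to\jp(\jp M)$ with $\ell\circ\Lambda=\Lambda\circ\hat{\ell}$. In the curve notation of Remark \ref{remark1} this reads
\[
\ell([x,X_{\alpha};Y_{\alpha},C_{\alpha\beta}])=[x,Y_{\alpha};X_{\alpha},C_{\beta\alpha}],
\]
the expected interchange of the primary and secondary data together with a transposition of the second-order term.

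Finally I would verify that $\ell$ is a smooth involution. Smoothness follows because $\Lambda$ is a surjective submersion (Proposition \ref{lambda}) and $\Lambda\circ\hat{\ell}$ is smooth, so $\ell$ is smooth by the universal property of the quotient by a submersion. The involution property follows from surjectivity of $\Lambda$, since $\ell\circ\ell\circ\Lambda=\ell\circ\Lambda\circ\hat{\ell}=\Lambda\circ\hat{\ell}\circ\hat{\ell}=\Lambda$ forces $\ell\circ\ell=\mathrm{id}$. I expect the only real subtlety to be the bookkeeping in the descent step, namely confirming that the one component on which $\hat{\ell}$ acts nontrivially, $B_{\alpha\dot{\beta}}\mapsto B_{\beta\dot{\alpha}}$, lands inside the retained mixed block and is consistent with the symmetry of the Hessian; everything else is formal once the coordinate action of $\hat{\ell}$ is in hand.
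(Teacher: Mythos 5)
Your proposal is correct and follows essentially the same route as the paper: define $\hat{\ell}$ by precomposition with the flip $(u,w)\mapsto(w,u)$, compute its block action on $(x,A,B)$, use the symmetry of the Hessian to see that the transposed mixed block $B_{\dot{\alpha}\beta}=[B_{\alpha\dot{\beta}}]^{T}$ is determined by the data $\Lambda$ retains, and conclude smoothness of the descended map from $\Lambda$ being a surjective submersion. The only cosmetic difference is that you phrase the well-definedness step as ``$\hat{\ell}$ permutes the forgotten blocks among themselves,'' whereas the paper verifies the same fact by directly comparing $(\tilde{\psi}\circ\Lambda\circ\tilde{\ell})(j^{2}\phi)$ and $(\tilde{\psi}\circ\Lambda\circ\tilde{\ell})(j^{2}\phi')$ in coordinates.
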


To prove this theorem, we need to show that $\hat{\ell}$ is a diffeomorphism, and the induced map $\ell$ is well defined. 

\begin{proof}

Let $\mathfrak{f}: \RR ^{2p} \to \RR^{2p}$ denote the flip map $(u,w) \to (w,u)$ (the map that flips first and second $p-$ tuples). Define 

\begin{equation}
\tilde{\ell}: J^2_{2p}M \to J^2_{2p}M \nonumber
\end{equation}

by

\begin{equation}
\tilde{\ell}(j^2\phi)=j^2(\phi \circ \mathfrak{f}) \nonumber
\end{equation}

The local form of $\tilde{\ell}$ is the involution $(x,A,B) \to (x,\bar{A}, \bar{B})$ such that

\begin{equation}
(x,\bar{A},\bar{B})=(x, [A_{\dot{\alpha}} A_{\alpha}], \begin{pmatrix} B_{\dot{\alpha}\dot{\beta}} & B_{\alpha \dot{\beta}} \\ B_{\dot{\alpha} \beta} & B_{\alpha \beta} \end{pmatrix}). \label{localform}
\end{equation}

(For detailed proof of Equation (\ref{localform}), see Appendix section, Corollary \ref{corollary8} ) 
\newline

The linear function corresponding to matrix $\bar{A}$ is

\begin{equation}
\bar{A}(U,W)=A_{\dot{\alpha}}U+ A_{\alpha} W=[A_{\alpha} A_{\dot{\alpha}}] .(W,U)=A(f(U,W)). \nonumber
\end{equation}
 
Therefore, $\bar{A}=A\circ f$. The symmetric bi-linear form corresponding to matrix $\bar{B}$ is

\begin{eqnarray}
\bar{B}((U,W),(\bar{U},\bar{W}))&=& (\bar{U}, \bar{W})^T.\bar{B}. (U,W) \nonumber \\
                                &=& B_{\dot{\alpha}\dot{\beta}} U \bar{U} + B_{\alpha\dot{\beta}} W \bar{U} + B_{\dot{\alpha}\beta} U \bar{W} + B_{{\alpha}\beta} W \bar{W} \nonumber \\
	 													    &=& \mathfrak{f} (\bar{U} , \bar{W} ) ^T . B . \mathfrak{f} ( U , W ) \nonumber \\
														    &=& B (\mathfrak{f} (U,W) , \mathfrak{f} (\bar{U} , \bar{W} ) ). \nonumber
\end{eqnarray}

Thus, $\bar{B}=B\circ (\mathfrak{f},\mathfrak{f})$. Then, the local form of $\tilde{\ell}$ is the function

\begin{equation}
(x,A,B) \to (x,A\circ \mathfrak{f}, B\circ (\mathfrak{f}, \mathfrak{f})) \nonumber
\end{equation}

which is a diffeomorphism. 
\newline

To finish the proof, we need to show that above defined $\ell$ is well defined.
\newline 

 Let $j^2\phi, j^2\phi' \in J^2_{2p}M.$
\newline

Suppose that $\Lambda(j^2\phi)=\Lambda(j^2\phi').$ Then,

\begin{eqnarray}
&&\widetilde{\psi}(\Lambda(j^2\phi))=\widetilde{\psi}(\Lambda(j^2\phi')) \nonumber \\
\Rightarrow && (\widetilde{\psi}\circ \Lambda)(j^2\phi)=(\widetilde{\psi}\circ \Lambda)(j^2\phi') \nonumber
\end{eqnarray}

By Diagram (\ref{lambdatilde}), we have

\begin{eqnarray}
(\hat{\Lambda} \circ \psi^2) (j^2\phi)&=&(\hat{\Lambda} \circ \psi^2)(j^2\phi') \nonumber \\
\Rightarrow (\phi(0,0), \frac{\partial\phi}{\partial w_{\alpha}}, \frac{\partial\phi}{\partial u_{\alpha}}, \frac{\partial^2\phi}{\partial u_{\alpha}\partial w_{\beta}})&=&(\phi'(0,0), \frac{\partial\phi'}{\partial w_{\alpha}}, \frac{\partial\phi'}{\partial u_{\alpha}}, \frac{\partial^2\phi'}{\partial u_{\alpha}\partial w_{\beta}}) \nonumber \\
\Rightarrow (\phi(0,0), A_{\dot{\alpha}}, A_{\alpha}, B_{\alpha \dot{\beta}})&=&(\phi'(0,0), A'_{\dot{\alpha}}, A'_{\alpha}, B'_{\alpha \dot{\beta}}) \nonumber
\end{eqnarray}

On the last equation, we can conclude that $B_{\alpha \dot{\beta}}=B'_{\alpha \dot{\beta}}$. Since  the matrices $B$ and $B'$ are symmetric, then 

\begin{equation*}
B_{\alpha \dot{\beta}}=[B_{\dot{\alpha}\beta}]^T
\end{equation*}

 which implies 

\begin{equation*}
B_{\dot{\alpha} \beta}=B'_{\dot{\alpha} \beta}.
\end{equation*}

Now, we focus on the function $\tilde{\ell}$. Using commutative Diagram (\ref{lambdatilde}), $\tilde{\psi} \circ \Lambda =\hat{\Lambda} \circ \psi ^2$. Then

\begin{eqnarray*}
(\tilde{\psi} \circ \Lambda \circ \tilde{\ell})(j^2\phi)&=&(\tilde{\psi} \circ \Lambda)(j^2(\phi \circ \mathfrak{f})) \nonumber \\
																												&=&(\hat{\Lambda} \circ \psi ^2) (j^2(\phi \circ \mathfrak{f})) \nonumber \\
																												&=& \hat{\Lambda} (\phi(0,0), \frac{\partial (\phi \circ \mathfrak{f})}{\partial \bar{u}_{\bar{\alpha}}}, \frac{\partial ^2 (\phi \circ \mathfrak{f})}{\partial \bar{u}_{\bar{\alpha}} \partial \bar{u}_{\bar{\beta}}}) \nonumber \\
				&=&\hat{\Lambda} (x,[A_{\dot{\alpha}},A_{\alpha}],\begin{pmatrix} B_{\dot{\alpha}\dot{\beta}} & B_{\alpha \dot{\beta}} \\ B_{\dot{\alpha} \beta} & B_{\alpha \beta} \end{pmatrix})\nonumber \\
																												&=& (x,A_{\alpha}, A_{\dot{\alpha}}, B_{\dot{\alpha} \beta})	\nonumber \\
																												&=& (x',A'_{\alpha}, A'_{\dot{\alpha}}, B'_{\dot{\alpha} \beta})	\nonumber \\
																												&=& (\tilde{\psi} \circ \Lambda \circ \tilde{\ell})(j^2 \phi '). \nonumber 
\end{eqnarray*}

On the other hand, because $\tilde{\psi}$ is injective, it follows immediately that 

\begin{equation}
(\Lambda \circ \tilde{\ell})(j^2 \phi)=(\Lambda \circ \tilde{\ell})(j^2 \phi '). \label{welldefined}
\end{equation}
 
Equation (\ref{welldefined}) implies that $\ell$ is a function on $\jp M$. Since $\Lambda$ is a surjective submersion, and $\tilde{\ell}$ is a diffeomorphism, then  $\ell$ is a differentiable function (see \cite{Clark} and \cite{Dieudonne}, 16.7.7,ii). Since $\tilde{\ell}$ is an involution i.e. $\tilde{\ell}$ has its own inverse, then $\ell$ is also an involution, and the function $\ell$ is a diffeomorphism.

\end{proof}

In the following proposition, we define aforementioned involution $\ell$.

\begin{proposition} \label{propell}

The involution $\ell:\jp(\jpm) \to \jp(\jpm)$ is defined by the following:

\begin{equation}
\ell([y,Y_{\alpha};X_{\alpha}, C_{\alpha \beta}])=[y, X_{\alpha}; Y_{\alpha}, [C_{\alpha \beta}]^T] \nonumber
\end{equation}

\end{proposition}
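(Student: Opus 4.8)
The plan is to read off $\ell$ directly from the defining property established in Theorem \ref{lemma4}: the involution $\ell$ is the map induced on the quotient, so it is characterized by $\ell \circ \Lambda = \Lambda \circ \tilde{\ell}$. Since $\Lambda$ is surjective by Proposition \ref{lambda}, every point of $\jp(\jpm)$ has the form $\Lambda(j^2\phi)$, and it therefore suffices to push one suitably chosen representative $j^2\phi$ through $\tilde{\ell}$ and then through $\Lambda$, recording the result in curve notation. All the ingredients are already in place: the local form of $\Lambda$ in curve notation (Remark \ref{remark1}) and the local form of $\tilde{\ell}$ (Equation (\ref{localform})).

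First I would fix an arbitrary $[y,Y_{\alpha};X_{\alpha},C_{\alpha\beta}] \in \jp(\jpm)$ and produce a $\Lambda$-preimage for it. By Remark \ref{remark1}, $\Lambda([x;[A_{\alpha}\,A_{\dot{\alpha}}];B_{\bar{\alpha}\bar{\beta}}]) = [x,A_{\dot{\alpha}};A_{\alpha},B_{\alpha\dot{\beta}}]$, so matching slots I would take a $2$-jet $j^2\phi = [x;[A_{\alpha}\,A_{\dot{\alpha}}];B_{\bar{\alpha}\bar{\beta}}]$ with $x=y$, $A_{\alpha}=X_{\alpha}$, $A_{\dot{\alpha}}=Y_{\alpha}$, and $B_{\alpha\dot{\beta}}=C_{\alpha\beta}$; the remaining blocks $B_{\alpha\beta}$ and $B_{\dot{\alpha}\dot{\beta}}$ may be chosen arbitrarily, as they never enter the formula for $\Lambda$. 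By construction $\Lambda(j^2\phi) = [y,Y_{\alpha};X_{\alpha},C_{\alpha\beta}]$, so this representative is correct.

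Next I would apply $\tilde{\ell}$ and then $\Lambda$. By Equation (\ref{localform}) the linear part of $\tilde{\ell}(j^2\phi)$ is $\bar{A}=[A_{\dot{\alpha}}\,A_{\alpha}]$ and the quadratic part is the flipped block matrix $\bar{B}$. Reading the new jet through $\Lambda$ via Remark \ref{remark1}, the base point stays $y$, the two linear slots exchange roles (so $A_{\alpha}=X_{\alpha}$ and $A_{\dot{\alpha}}=Y_{\alpha}$ appear in swapped positions), and the last slot is the block sitting in the $B_{\alpha\dot{\beta}}$ position of $\bar{B}$, which by the flip is $B_{\dot{\alpha}\beta}$. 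Hence $\Lambda(\tilde{\ell}(j^2\phi)) = [y,X_{\alpha};Y_{\alpha},B_{\dot{\alpha}\beta}]$.

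The only nonroutine point, which I expect to be the crux, is identifying $B_{\dot{\alpha}\beta}$ with $[C_{\alpha\beta}]^{T}$; this is precisely the symmetry of the bilinear form $B$ already invoked in the proof of Theorem \ref{lemma4}, which gives $B_{\alpha\dot{\beta}} = [B_{\dot{\alpha}\beta}]^{T}$ and hence $B_{\dot{\alpha}\beta} = [B_{\alpha\dot{\beta}}]^{T} = [C_{\alpha\beta}]^{T}$. Substituting yields $\ell(\Lambda(j^2\phi)) = \Lambda(\tilde{\ell}(j^2\phi)) = [y,X_{\alpha};Y_{\alpha},[C_{\alpha\beta}]^{T}]$, which is the asserted formula. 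Because $\ell$ is already known from Theorem \ref{lemma4} to be well defined and a diffeomorphism, nothing else needs checking; the single place I would proceed with care is the bookkeeping of which block of the flipped $\bar{B}$ occupies the $B_{\alpha\dot{\beta}}$ slot read by $\Lambda$, verifying it against the block convention fixed in Proposition \ref{lambda}.
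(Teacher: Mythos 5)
Your proposal is correct and follows essentially the same route as the paper: use surjectivity of $\Lambda$ to lift $[y,Y_{\alpha};X_{\alpha},C_{\alpha\beta}]$ to a $2$-jet $j^2\phi$, apply the defining relation $\ell\circ\Lambda=\Lambda\circ\tilde{\ell}$ together with the local form of $\tilde{\ell}$ and of $\Lambda$, and invoke the symmetry of $B$ to identify $B_{\dot{\alpha}\beta}=[B_{\alpha\dot{\beta}}]^{T}=[C_{\alpha\beta}]^{T}$. The block bookkeeping you flag as the crux is exactly the step the paper carries out, so no gap remains.
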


\begin{proof}

Suppose that $v=[y,Y_{\alpha}; X_{\alpha}, C_{\alpha \beta}] \in \jp(\jpm).$ Due to $\Lambda$ being a surjective function, there exists $j^2 \phi=[x,A_{\bar{\alpha}}, B_{\bar{\alpha} \bar{\beta}}] \in J^2_{2p} M$ such that 

\begin{equation}
A_{\bar{\alpha}}=[A_{\alpha} A_{\dot{\alpha}}], \nonumber 
\end{equation}

\begin{equation}
B_{\bar{\alpha} \bar{\beta}}=\begin{pmatrix} B_{\alpha \beta} & B_{\dot{\alpha}\beta} \\ B_{\alpha \dot{\beta}} & B_{\dot{\alpha} \dot{\beta}} \end{pmatrix}, \nonumber
\end{equation}

and  

\begin{equation*}
[y,Y_{\alpha}; X_{\alpha}, C_{\alpha \beta}] =\Lambda(j^2 \phi)=\Lambda ([x,A_{\bar{\alpha}}, B_{\bar{\alpha} \bar{\beta}}])=[x, A_{\dot{\alpha}}, A_{\alpha}, B_{\alpha \dot{\beta}}] 
\end{equation*}

Then,
\begin{equation}
x=y, \hspace{2.5cm} Y_{\alpha}=A_{\dot{\alpha}}, \hspace{2.5cm} X_{\alpha}=A_{\alpha}, \hspace{2.5cm} C_{\alpha \beta}= B_{\alpha \dot{\beta}}  \nonumber
\end{equation}

On the other hand, since the matrix $B_{\bar{\alpha}\bar{\beta}}$ is a symmetric, then $[B_{\alpha \dot{\beta}}]^T=B_{\dot{\alpha}\beta}$. Then

\begin{eqnarray*}
\ell([y,Y_{\alpha};X_{\alpha}, C_{\alpha \beta}])=(\ell \circ \Lambda)(j^2 \phi))&=& (\Lambda \circ \tilde{\ell})(j^2 \phi) \nonumber \\
                                       &=& [x, A_{\alpha}; A_{\dot{\alpha}}, B_{\dot{\alpha}\beta}]. \nonumber \\
																			 &=& [y, X_{\alpha}; Y_{\alpha}, [B_{\alpha \dot{\beta}}]^T] \nonumber \\
																			 &=& [y, X_{\alpha}; Y_{\alpha}, [C_{\alpha \beta}]^T]
\end{eqnarray*}

which finishes the proof.

\end{proof}

In the following proposition, we shall show that the primary and secondary vector bundle structures on $\jp(\jp M)$ are isomorphic.

\begin{proposition} 

For any smooth manifold $M$, the function

\begin{equation}
\ell : \jp (\jp M) \to \jp (\jp M) \nonumber
\end{equation}

which is defined in Theorem \ref{lemma4} is a $\jp M-$ isomorphism of vector bundles in the following way:

\begin{equation}
\begindc{\commdiag} 
\obj(1,4) [A] {$ \jp (\jpm) $}
\obj(5,4) [B] {$\jp (\jp M)$}
\obj(1,1) [C] {$ \jpm$}
\obj(5,1) [D] {$ \jpm $}
\mor{A}{B} {$\ell $}
\mor{A}{C} {$\tilde{\pi}$}
\mor{B}{D} {$\pi_M^1$}
\mor{C}{D} {$=$} 
\enddc \nonumber
\end{equation}
\end{proposition}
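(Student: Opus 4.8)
The plan is to verify directly, in the curve notation of Remark \ref{remark1} and Proposition \ref{propell}, that $\ell$ intertwines the two projections as the diagram demands and restricts to a linear isomorphism on each fibre. Since Theorem \ref{lemma4} already gives that $\ell$ is a diffeomorphism (indeed an involution), the only new content is the vector-bundle compatibility: commutativity over $\mathrm{id}_{\jp M}$ together with fibrewise linearity.

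First I would pin down the two projections in the shared coordinate $[y,Y_{\alpha};X_{\alpha},C_{\alpha\beta}]$. For $j^1\Phi$ with $\Phi(u)=\jp(\phi_u)$, the primary projection $\tilde{\pi}$ records $\Phi(0)=[y;Y_{\alpha}]\in\jp M$, while the secondary projection $\pi_M^1=(\pi_M)^1$ records $\jp(\pi_M\circ\Phi)=[y;X_{\alpha}]\in\jp M$, because $(\pi_M\circ\Phi)(u)=\phi(u,0)$. Thus in this notation $\tilde{\pi}$ reads off slots $(1,2)$ and $\pi_M^1$ reads off slots $(1,3)$. Applying the formula of Proposition \ref{propell}, namely $\ell([y,Y_{\alpha};X_{\alpha},C_{\alpha\beta}])=[y,X_{\alpha};Y_{\alpha},[C_{\alpha\beta}]^T]$, and reading $\pi_M^1$ off slots $(1,3)$ of the image yields $[y;Y_{\alpha}]$, which is exactly $\tilde{\pi}$ of the original point. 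Hence $\pi_M^1\circ\ell=\tilde{\pi}$, i.e. the square commutes over $\mathrm{id}_{\jp M}$.

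Next I would check fibrewise linearity using the explicit operations of Proposition \ref{prop1} and of the primary structure. In the coordinate $[y,Y_{\alpha};X_{\alpha},C_{\alpha\beta}]$ the primary operations $+_1,\bullet_1$ fix the base slots $(1,2)=\Phi(0)$ and act on the velocity slots $(3,4)=(X_{\alpha},C_{\alpha\beta})$, whereas in the target the secondary operations $+_{_2},\bullet_{_2}$ fix the base slots $(1,3)$ and act on the remaining slots $(2,4)$. Substituting $v=[y,Y_{\alpha};X_{\alpha},C_{\alpha\beta}]$ and $v'=[y,Y_{\alpha};X'_{\alpha},C'_{\alpha\beta}]$ (same primary base) into the formula for $\ell$ and comparing $\ell(v+_1 v')$ with $\ell(v)+_{_2}\ell(v')$, and likewise $\ell(\lambda\bullet_1 v)$ with $\lambda\bullet_{_2}\ell(v)$, reduces the whole claim to the linearity of transposition $C\mapsto[C]^T$; one finds equality in every case. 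Therefore $\ell$ carries each primary fibre linearly onto the secondary fibre lying over the same point of $\jp M$.

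Finally I would assemble the three ingredients: $\ell$ is a diffeomorphism by Theorem \ref{lemma4}, it covers $\mathrm{id}_{\jp M}$ by the first step, and it is linear on fibres by the second. A fibre-preserving diffeomorphism covering the identity whose restriction to each fibre is a linear isomorphism is a vector-bundle isomorphism, the inverse being linear on fibres because the inverse of a linear isomorphism is linear. I expect the only delicate point to be the bookkeeping: since $\ell$ interchanges the base-direction datum $Y_{\alpha}$ with the fibre-direction datum $X_{\alpha}$, one must be scrupulous about which slots count as ``base'' and which as ``fibre'' for each of the two structures. Once the identifications of $\tilde{\pi}$, $\pi_M^1$ and of $+_1,\bullet_1$ versus $+_{_2},\bullet_{_2}$ in the common notation are fixed, the linearity itself is a one-line substitution.
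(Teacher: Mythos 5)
Your proposal is correct, and its first step --- identifying $\tilde{\pi}$ with slots $(1,2)$ and $\pi_M^1$ with slots $(1,3)$ of $[y,Y_{\alpha};X_{\alpha},C_{\alpha\beta}]$, then reading $\pi_M^1\circ\ell=\tilde{\pi}$ directly off the formula of Proposition \ref{propell} --- is exactly the computation the paper performs. Where you differ is that you do strictly more: the paper's proof opens with ``we only need to show that $\ell$ preserves fibers'' and stops after the commutativity check, whereas you go on to verify that $\ell$ intertwines the primary operations $+_1,\bullet_1$ (which fix slots $(1,2)$ and act on $(3,4)$) with the secondary operations $+_2,\bullet_2$ (which fix slots $(1,3)$ and act on $(2,4)$), reducing the whole matter to the linearity of $C\mapsto C^{T}$. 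Since a fibre-preserving diffeomorphism covering the identity need not be linear on fibres, this step is not redundant; it is precisely the content of ``isomorphism of \emph{vector bundles}'' that the paper leaves implicit. Your slot bookkeeping is consistent with the paper's conventions (in Proposition \ref{propell} the primary base point $\Phi(0)$ occupies slots $(1,2)$ and the secondary base point $j^1(\pi_M\circ\Phi)$ occupies slots $(1,3)$, which is why $\ell$, swapping slots $2$ and $3$, exchanges the two structures), and your closing assembly of diffeomorphism, commutativity, and fibrewise linearity is sound. In short: same strategy as the paper, with the linearity gap filled.
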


\begin{proof}

To prove the theorem, we only need to show that $\ell$ preserves fibers. 
\newline

Suppose that $j^1 \Phi \in \tilde{\pi}^{-1} ([x,X_{\alpha}])$. Then from Remark \ref{remark1}, $j^1 \Phi =[x,X_{\alpha}; Y_{\alpha}, C_{\alpha \beta}]$.
Using Proposition \ref{propell}, we have 

\begin{eqnarray}
 (\pi^1 \circ \ell)(j^1 \Phi) &=& (\pi^1 \circ \ell)([x,X_{\alpha}; Y_{\alpha}, C_{\alpha \beta}])   \nonumber \\
                              &=& \pi^1 ([x,Y_{\alpha}; X_{\alpha}, [C_{\alpha \beta}]^T) \nonumber \\
															&=& [x,X_{\alpha}] \nonumber \\
															&=& \tilde{\pi}(j^1 \Phi) \nonumber
\end{eqnarray}

which proves that $\ell$ preserves fibers. Thus $\ell$ is a bundle isomorphism.

\end{proof}

 
 \section{Appendix}

Below, we present some identities and statements that are used in this paper.

\begin{lemma}\label{theo1}

$(\jpm , \pi, M, L(\RR ^p, \RR ^m))$ is a vector bundle with the bundle projection $\pi_M:\jpm \to M$ which is defined by $\pi_M (j^1\phi)=\phi(0)$ for all $ j^1\phi \in \jpm $, the the above manifold structure on $\jpm $, and the intrinsic operations on $\pi_M^{-1}\{x\}$ defined by

\begin{equation}
[x;X_{\alpha}]+[x;X'_{\alpha}]=[x;X_{\alpha}+X'_{\alpha}] \hspace{2cm} \lambda \bullet [x;X_{\alpha}]=[x;\lambda X_{\alpha}] 
\end{equation}  

for all $x \in M$. 

\end{lemma}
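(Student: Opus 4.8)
The plan is to realize $\jpm$ as a smooth manifold carrying a surjective submersion $\pi_M$ onto $M$, to equip it with a family of local trivializations indexed by the charts of $M$, and then to verify that the induced transition functions act on each fibre by linear isomorphisms; the standard vector bundle construction lemma (a smooth surjection admitting local trivializations with fibrewise-linear, smoothly varying transitions is a vector bundle) then assembles these data into the desired bundle with typical fibre $L(\RR^p,\RR^m)$. Under the identification $j^1\phi\equiv[x;X_\alpha]$ the fibre over $x$ corresponds to the $p$-fold product $(T_xM)^p$ via $X_\alpha\mapsto\big[\frac{\partial(x_i\circ\phi)}{\partial u_\alpha}|_0\big]$, an $m\times p$ matrix read as a linear map $\RR^p\to\RR^m$, so the dimensions match.

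First I would take the manifold structure already fixed in Section~2.1: for each chart $(U,x_1,\dots,x_m)$ the induced chart $(J_p^1U,x_i,x_\alpha^i)$ on $\jpm$ is given by Equation~\ref{equation1}. In these coordinates the projection $\pi_M(j^1\phi)=\phi(0)$ is the coordinate projection $(x_i,x_\alpha^i)\mapsto(x_i)$, hence $\pi_M$ is a smooth surjective submersion with $\pi_M^{-1}(U)=\jp U$. For each chart I would define the local trivialization
\[
\tau_U:\pi_M^{-1}(U)\to U\times L(\RR^p,\RR^m),\qquad
\tau_U(j^1\phi)=\Big(\phi(0),\big[\tfrac{\partial(x_i\circ\phi)}{\partial u_\alpha}\big|_0\big]\Big).
\]
This map is fibre-preserving, since its first component is $\pi_M$, and it is a diffeomorphism because in the charts above it is essentially the identity. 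Its well-definedness — independence of the representative $\phi$ of the jet — is precisely the defining equivalence relation for $1$-jets, and the image matrix is exactly the coordinate expression of the tuple $(X_\alpha)$ of tangent vectors.

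Next I would reconcile the stated fibrewise operations with this linear model. Because the $X_\alpha\in T_xM$ are intrinsically defined tangent vectors, the operations $[x;X_\alpha]+[x;X'_\alpha]=[x;X_\alpha+X'_\alpha]$ and $\lambda\bullet[x;X_\alpha]=[x;\lambda X_\alpha]$ are well defined using the vector space structure of $T_xM$ index by index, and each fibre becomes a vector space isomorphic to $(T_xM)^p\cong\RR^{mp}$. Under $\tau_U$ these operations become entrywise addition and scalar multiplication of matrices, so $\tau_U$ restricts on every fibre to a linear isomorphism onto $L(\RR^p,\RR^m)$. This is the local content of Theorem~\ref{corderoo}(iii), which furnishes the vector space structure on $\jp$ of a vector space.

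Finally — and this is the step I expect to carry the real weight — I would compute the transition function between two overlapping charts $\varphi=(x_i)$ on $U$ and $\chi=(\tilde x_i)$ on $V$. The chain rule applied to the coordinate change $\chi\circ\varphi^{-1}$ gives
\[
\frac{\partial(\tilde x_i\circ\phi)}{\partial u_\alpha}\Big|_0=\sum_j\frac{\partial \tilde x_i}{\partial x_j}\Big|_{\varphi(x)}\,\frac{\partial(x_j\circ\phi)}{\partial u_\alpha}\Big|_0,
\]
so that $\tau_V\circ\tau_U^{-1}$ has the form $(x,A)\mapsto(x,J(x)\,A)$, where $J(x)$ is the Jacobian of $\chi\circ\varphi^{-1}$ at $\varphi(x)$. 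This is exactly the functorial identity $h^1([x;X_\alpha])=[h(x);h_\ast X_\alpha]$ of Theorem~\ref{corderoo}(i) taken with $h$ the coordinate change and $h_\ast$ its differential; since $h$ is a diffeomorphism, $J(x)\in GL(m,\RR)$ and the action $A\mapsto J(x)A$ is a linear automorphism of $L(\RR^p,\RR^m)$ depending smoothly on $x$, with the cocycle condition holding automatically because differentials of composed coordinate changes compose. With local triviality, fibrewise linearity of the $\tau_U$, and smooth linear transitions in hand, the vector bundle construction lemma gives that $(\jpm,\pi_M,M,L(\RR^p,\RR^m))$ is a vector bundle whose fibrewise operations are the stated ones. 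The main obstacle is this chain-rule verification that the transitions are fibrewise linear and invertible; the remaining checks are routine.
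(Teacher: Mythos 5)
Your proposal is correct, and it reaches the conclusion by a slightly different decomposition than the paper. The paper's proof imports the fibre-bundle structure of $\jpm$ wholesale from \cite{Cordero} (the trivializations $\Psi_M:J_p^1U\to U\times L(\RR^p,\RR^m)$ are taken as given) and then reduces the entire verification to one point: that each fibre map $(\Psi_M)_x$ is a linear isomorphism with respect to the intrinsic operations $[x;X_\alpha]+[x;X'_\alpha]=[x;X_\alpha+X'_\alpha]$ and $\lambda\bullet[x;X_\alpha]=[x;\lambda X_\alpha]$, computed via $(\Psi_M)_x(j^1\phi)=\bigl(X_\alpha[x_i]\bigr)$. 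Since the intrinsic operations are chart-independent, linearity of every $(\Psi_M)_x$ forces the transition maps $(\Psi_V)_x\circ(\Psi_U)_x^{-1}$ to be linear automatically, so the paper never writes them down. You instead build the bundle from the cocycle side: you exhibit the same trivializations explicitly, and the weight of your argument is the chain-rule computation showing $\tau_V\circ\tau_U^{-1}(x,A)=(x,J(x)A)$ with $J(x)\in GL(m,\RR)$ depending smoothly on $x$, after which the standard vector-bundle construction lemma assembles the data. Your route is more self-contained (it does not lean on the cited bundle structure and makes the $GL(m,\RR)$-valued transitions visible, which is also the content of Theorem~\ref{corderoo}(i) applied to a coordinate change); the paper's route is shorter because, once the operations are known to be intrinsic, a single fibrewise linearity check suffices. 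Both verifications are sound and both identify the fibre over $x$ with $(T_xM)^p\cong L(\RR^p,\RR^m)$ in the same way.
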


\begin{proof}

In \cite{Cordero}, it is proven that $\jpm $ is a bundle with the local trivialization 

\begin{equation}
\Psi_M:J_p^1U \to U \times L(\RR ^p,\RR ^m) \nonumber
\end{equation} 

where $\jp U =\{j^1 \phi : \phi(0) \in U\}$. Showing that $(\Psi_M)_x: \pi_M^{-1}\{x\} \to L(\RR ^p,\RR ^m))$ being an isomorphism, implies that $(\jpm ,\pi, M, L(\RR ^p,\RR ^m)$ is a vector bundle.
\newline

From Equation \ref{equation1}, we have

\begin{eqnarray*}
(\Psi_M)_x(j^1\phi)=\frac{\partial(x_i \circ \phi)}{\partial u_{\alpha}}|_0
\end{eqnarray*}

If we denote $j^1\phi=[x;X_{\alpha}]$ where $X_{\alpha}=\phi^{\alpha}_*(d/du)$, then 

\begin{equation*}
(\Psi_M)_x(j^1\phi)=
\begin{pmatrix}
X_{\alpha}[x_i]
\end{pmatrix}.
\end{equation*}

Therefore, we have

\begin{eqnarray*}
(\Psi_M)_x([x;X_{\alpha}]+[x;X'_{\alpha}]) =(X_{\alpha}+X'_{\alpha})[x_i] &=& (X_{\alpha}[x_i]+ X'_{\alpha})[x_i]  \nonumber \\
                                                                          &=&(\Psi_M)_x([x;X_{\alpha}])+(\Psi_M)_x([x;X'_{\alpha}]).\nonumber \\
\end{eqnarray*}

On the other hand, we have

\begin{equation}
(\Psi_M)_x(\lambda \bullet [x;X_{\alpha}])= (\lambda X_{\alpha})[x_i] = \lambda  (X_{\alpha})[x_i] = \lambda (\Psi_M)_x([x;X_{\alpha}]) \nonumber
\end{equation}                                         
                                          
which shows that $(\Psi_M)_x$ is a linear function. 

\end{proof}   

\begin{lemma} 

The function $\Lambda$ defined in Lemma \ref{lemma4} is well defined.

\end{lemma}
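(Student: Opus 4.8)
The plan is to reduce the claim to a single assertion: that the $1$-jet $j^1\Phi$ at the origin depends only on the $2$-jet $j^2\phi$ at $(0,0)$, and not on the particular representative $\phi$ chosen for that $2$-jet. First I would observe that for each smooth $\phi:\RR^p\times\RR^p\to M$ the prescription in Equation (\ref{Phi}) produces a genuine element $\Phi\in C^\infty(\RR^p,\jpm)$ (smoothness of $\psi\circ\Phi$ was already checked just before Proposition \ref{lambda}), so that $j^1\Phi\in\jp(\jpm)$ is unambiguously attached to $\phi$. Consequently the only thing left to verify is the implication $j^2\phi=j^2\phi'\;\Rightarrow\;j^1\Phi=j^1\Phi'$.

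Next I would fix a trivializing chart and use the fact that, in the trivialization, a $1$-jet of a map into $\jpm$ is completely determined by the two data $\Phi(0)$ and $\partial\Phi/\partial u^\alpha|_0$. I would then invoke the coordinate computations carried out inside the proof of Proposition \ref{lambda}, namely
\[
(\psi\circ\Phi)(0)=\Big(\phi(0,0),\ \tfrac{\partial\phi(0,w)}{\partial w^{\alpha}}\big|_{w=0}\Big),
\]
\[
\tfrac{\partial(\psi\circ\Phi)}{\partial u^{\alpha}}\big|_0=\Big(\tfrac{\partial\phi(u,0)}{\partial u^{\alpha}}\big|_{u=0},\ \tfrac{\partial^2\phi}{\partial u^{\alpha}\partial w^{\beta}}\big|_{(0,0)}\Big).
\]
Every entry on the right-hand side is one of $\phi(0,0)$, $A_{\dot\alpha}$, $A_\alpha$, or $B_{\alpha\dot\beta}$, i.e. a value, a first partial, or a second partial of $\phi$ at the origin. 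These are precisely the quantities recorded by $j^2\phi$ through the representation $j^2\phi=[x;A_{\bar\alpha};B_{\bar\alpha\bar\beta}]$.

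With these two displays in hand the conclusion is immediate: the hypothesis $j^2\phi=j^2\phi'$ forces equality of $\phi(0,0)$, of all first partials, and of all second partials at $(0,0)$; substituting into the two formulas gives $(\psi\circ\Phi)(0)=(\psi\circ\Phi')(0)$ and $\partial(\psi\circ\Phi)/\partial u^\alpha|_0=\partial(\psi\circ\Phi')/\partial u^\alpha|_0$, whence $j^1\Phi=j^1\Phi'$. Since the comparison is performed within one trivializing chart and equality of $1$-jets may be tested in such a chart (via Equations (\ref{secondary1}) and (\ref{secondary2})), this finishes the argument.

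The one step needing genuine care — the main obstacle — is the second display, in particular the mixed term $\partial^2\phi/\partial u^\alpha\partial w^\beta|_{(0,0)}$. The second slot of $\psi\circ\Phi$ is itself the first-order datum $\partial\phi_u/\partial w^\beta|_{w=0}=\partial\phi/\partial w^\beta|_{(u,0)}$, so differentiating $\Phi$ in the $u$-direction amounts to differentiating a first $w$-derivative in $u$; the hard part will be to verify, by a clean chain-rule argument, that this produces exactly the mixed second partial and that no higher-order information about $\phi$ can leak in. Once that identity is established (the first slot, being $\phi(u,0)$, manifestly contributes only $A_\alpha$), the remainder is routine substitution. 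I would note that this is nothing more than the portion of the computation in Proposition \ref{lambda} needed to legitimize $\Lambda$ as a map on equivalence classes, so the lemma simply isolates that step.
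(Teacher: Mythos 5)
Your proposal is correct and takes essentially the same route as the paper: both reduce the claim to showing that $\Phi(0)$ and $\partial\Phi/\partial u^{\alpha}|_{0}$ are expressible purely in terms of $\phi(0,0)$, the first partials, and the mixed second partials of $\phi$ at the origin, which is exactly the data recorded by $j^2\phi$. The one step you flag as needing care (that differentiating the slot $\partial\phi/\partial w^{\beta}|_{(u,0)}$ in $u$ yields precisely the mixed second partial and nothing of higher order) is treated in the paper at the same level of detail, via the identification of $j^1(\phi_u)$ with the pair $[\phi_u(0),\ \partial\phi_u/\partial w_{\beta}]$.
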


\begin{proof}
 
Suppose that $j^2 \phi = j^2 \phi '$ , where  $j^2 \phi, j^2 \phi ' \in J_{2p}^2 (M) $. To prove the lemma, we should show that $j^1 \Phi=j^1\Phi'.$ 
\newline

 The equality of 2-jets implies followings:

\begin{equation} \phi (0,0)=\phi ' (0,0), \label{i1} \end{equation} 

\begin{equation}\frac{\partial \phi}{\partial \bar{u}_{\bar{\alpha}}} |_{(0,0)}= \frac{\partial \phi '}{\partial \bar{u}_{\bar{\alpha}}}|_{(0,0)}, \label{i02} \end{equation} 

\begin{equation} \frac{\partial ^2 \phi}{\partial \bar{u}_{\alpha}\bar{u}_{\beta}}|_{(0,0)} =\frac{\partial ^2 \phi '}{\partial \bar{u}_{\alpha}\bar{u}_{\beta}}|_{(0,0)}.  \label{i03} 
\end{equation}

Equation \ref{i1} shows that  

\begin{equation}
\phi_0(0)=\phi '_0(0). \label{iiii1}
\end{equation}

The Equation (\ref{i02}) leads 

\begin{equation}
\frac{\partial \phi}{\partial u_{\alpha}}|_{(0,0)} = \frac{\partial \phi '}{\partial u_{\alpha}}|_{(0,0)}\hspace{1cm} and  \hspace{1cm} \frac{\partial \phi}{\partial w_{\alpha}}|_{(0,0)} = \frac{\partial \phi '}{\partial w_{\alpha}}|_{(0,0)}.  \nonumber 
\end{equation} 

which implies 

\begin{equation}
\Rightarrow \frac{\partial \phi (u,0)}{\partial u_{\alpha}}|_{u=0} = \frac{\partial \phi '(u,0)}{\partial u_{\alpha}}|_{u=0} \label{ii1} 
\end{equation}

and

\begin{equation}
\Rightarrow \frac{\partial \phi(0,w)}{\partial w_{\alpha}}|_{w=0} = \frac{\partial \phi '(0,w)}{\partial w_{\alpha}}|_{w=0}.  \label{ii2}
\end{equation}

Equation (\ref{ii2}) leads 

\begin{equation}
\frac{\partial \phi_0}{\partial w_{\alpha}}= \frac{\partial \phi_0}{\partial w_{\alpha}}. \label{iiii2}
\end{equation}

Combining Equations (\ref{iiii1}) and (\ref{iiii2}), we have $$\Phi(0)=j^1(\phi_0)=j^1(\phi'_0)=\Phi'(0).$$

On the other hand, by setting $\bar{u}_{\bar{\alpha}}=u_{\alpha}$ and $\bar{u}_{\bar{\beta}}=w_{\beta}$ in Equation \ref{i03}, we have

\begin{equation}
\frac{\partial ^2 \phi}{\partial u_{\alpha} w_{\beta}}|_{(0,0)} =\frac{\partial ^2 \phi '}{\partial u_{\alpha} w_{\beta}}|_{(0,0)}. \label{iii1}
\end{equation}

For all $u \in \RR ^p$, $j^1(\phi_u)$ can be associated to the ordered pair $[\phi_u(0), \frac{\partial \phi_u}{\partial w_{\beta}}]$. Then, combining Equations \ref{ii1} and \ref{iii1}, we have

\begin{eqnarray}
\frac{\partial \Phi}{\partial u_{\alpha}}|_{u=0}&=&(\frac{\partial \phi_u(0)}{\partial u_{\alpha}}, \frac{\partial ^2 \phi}{\partial u_{\alpha} \partial w_{\beta}}|_{(0,0)}) \nonumber \\ 
																								&=& (\frac{\partial \phi(u,0)}{\partial u_{\alpha}}, \frac{\partial ^2 \phi}{\partial u_{\alpha} \partial w_{\beta}}|_{(0,0)}) \nonumber \\ 
																								&=&(\frac{\partial \phi '_u(0)}{\partial u_{\alpha}}, \frac{\partial ^2 \phi '}{\partial u_{\alpha} \partial w_{\beta}}|_{(0,0)}) \nonumber \\
																								&=& \frac{\partial \Phi'}{\partial u_{\alpha}}|_{u=0}
\end{eqnarray}

which shows that $j^1 \Phi =j^1 \Phi '$. Thus $\Lambda$ is well defined.

\end{proof}

\begin{lemma} 

Let $\phi \in C^{\infty}(\RR ^{2p}, M)$, and $u^{\alpha}$, $w^{\alpha}$ denote the first p coordinate functions and the second p coordinate functions respectively on $\RR ^{2p}$. Let $\mathfrak{f}$ be the flip map defined in Lemma \ref{lemma4}. Then there exists following equations:

\begin{eqnarray}
\frac{\partial (\phi \circ \mathfrak{f})}{\partial u_{\alpha}}|_0=\frac{\partial \phi}{\partial w_{\alpha}}|_0  \hspace{2cm}
\frac{\partial (\phi \circ \mathfrak{f})}{\partial w_{\alpha}}|_0=\frac{\partial \phi}{\partial u_{\alpha}}|_0  \label{first} \\
\frac{\partial ^2 (\phi \circ \mathfrak{f})}{\partial u_{\beta}\partial u_{\alpha}}|_0=\frac{\partial^2 \phi}{\partial w_{\beta} \partial w_{\alpha}}|_0  \label{eq3} \\
\frac{\partial ^2 (\phi \circ \mathfrak{f})}{\partial w_{\beta}\partial u_{\alpha}}|_0=\frac{\partial^2 \phi}{\partial u_{\beta} \partial w_{\alpha}}|_0  \label{eq4} \\
\frac{\partial^2 (\phi \circ \mathfrak{f})}{\partial w_{\beta} w_{\alpha}}|_0=\frac{\partial \phi}{\partial u_{\beta} u_{\alpha}}|_0  \label{eq5} 
\end{eqnarray}

\end{lemma}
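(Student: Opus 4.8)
The plan is to reduce every identity to the ordinary chain rule in Euclidean space, using that $\mathfrak{f}$ is linear. First I would fix a local chart $(U,x_1,\dots,x_m)$ of $M$ around $\phi(0)$ and pass to the component functions $x_i\circ\phi$, which are honest $\RR$-valued smooth functions on a neighbourhood of $0\in\RR^{2p}$. All the partial derivatives in the statement are taken coordinate-wise in $M$, so it suffices to prove the four identities for a scalar-valued $\phi$; the manifold-valued case then follows componentwise, and the argument is uniform in the chart.

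Next I would record the elementary properties of the flip $\mathfrak{f}\colon\RR^{2p}\to\RR^{2p}$, $(u,w)\mapsto(w,u)$. Writing $y_1,\dots,y_{2p}$ for the coordinates on the domain of $\phi$ (the target copy of $\RR^{2p}$), with $y_\alpha=u_\alpha$ for $1\le\alpha\le p$ and $y_{p+\alpha}=w_\alpha$, the component functions of $\mathfrak{f}$ are $\mathfrak{f}_\alpha=w_\alpha$ on the first block and $\mathfrak{f}_{p+\alpha}=u_\alpha$ on the second. In particular $\mathfrak{f}(0)=0$, its Jacobian is the constant block-permutation matrix $\left(\begin{smallmatrix}0&I\\I&0\end{smallmatrix}\right)$, and, $\mathfrak{f}$ being linear, every second partial derivative of $\mathfrak{f}$ vanishes. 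The geometric content is that differentiating $\phi\circ\mathfrak{f}$ in the source $u_\alpha$-direction is the same as differentiating $\phi$ in the swapped $w_\alpha$-direction, and symmetrically for $w$.

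With these facts the first-order identities (\ref{first}) are immediate from the chain rule
\[
\frac{\partial(\phi\circ\mathfrak{f})}{\partial u_\alpha}\Big|_0=\sum_{\bar\gamma=1}^{2p}\frac{\partial\phi}{\partial y_{\bar\gamma}}\Big|_{\mathfrak{f}(0)}\,\frac{\partial\mathfrak{f}_{\bar\gamma}}{\partial u_\alpha}\Big|_0,
\]
in which the only surviving summand is the one indexed by the $w_\alpha$-slot $\bar\gamma=p+\alpha$, yielding $\partial\phi/\partial w_\alpha|_0$; the companion identity for $\partial/\partial w_\alpha$ is obtained by the same computation. For the second-order identities I would invoke the second-order chain rule, which for a composition with $\mathfrak{f}$ reads
\[
\frac{\partial^2(\phi\circ\mathfrak{f})}{\partial\xi\,\partial\eta}\Big|_0=\sum_{\bar\gamma,\bar\delta}\frac{\partial^2\phi}{\partial y_{\bar\gamma}\partial y_{\bar\delta}}\Big|_0\,\frac{\partial\mathfrak{f}_{\bar\gamma}}{\partial\xi}\,\frac{\partial\mathfrak{f}_{\bar\delta}}{\partial\eta}+\sum_{\bar\gamma}\frac{\partial\phi}{\partial y_{\bar\gamma}}\Big|_0\,\frac{\partial^2\mathfrak{f}_{\bar\gamma}}{\partial\xi\,\partial\eta}\Big|_0,
\]
for $\xi,\eta\in\{u_\alpha,w_\beta\}$. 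The last sum vanishes because $\mathfrak{f}$ is linear, and the first sum simply relabels each input direction by its flipped image: taking $(\xi,\eta)=(u_\alpha,u_\beta)$ gives (\ref{eq3}), $(\xi,\eta)=(u_\alpha,w_\beta)$ gives (\ref{eq4}), and $(\xi,\eta)=(w_\alpha,w_\beta)$ gives (\ref{eq5}) (equality of mixed partials making the order of $\xi,\eta$ immaterial).

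I do not expect a genuine analytic obstacle here: the result is nothing more than the chain rule for a constant-coefficient linear change of variables. The only points requiring care are the index bookkeeping — correctly matching each source coordinate with the target slot it controls under the flip — and the explicit observation that the linearity of $\mathfrak{f}$ annihilates the Hessian-of-$\mathfrak{f}$ term, so that the second-order chain rule collapses to a pure substitution.
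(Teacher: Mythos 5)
Your proof is correct and follows essentially the same route as the paper: both reduce all four identities to the chain rule applied to the composition with the flip, whose coordinate functions merely permute $u_{\alpha}$ and $w_{\alpha}$. The only cosmetic difference is that you invoke the second-order chain rule directly and note that the Hessian of $\mathfrak{f}$ vanishes by linearity, whereas the paper differentiates its first-order chain-rule identity once more; these amount to the same computation.
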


\begin{proof}

Since $\mathfrak{f}(u,w)=(w,u)$, then 

\begin{equation}
u_{\alpha} \circ \mathfrak{f}=w_{\alpha}, \hspace{1cm} w_{\alpha} \circ \mathfrak{f}=u_{\alpha}. \label{coord}
\end{equation}

Due to the Chern Rule, we have

\begin{equation}
\frac{\partial (\phi \circ \mathfrak{f})}{\partial u_{\bar{\alpha}}}=(\frac{\partial \phi}{\partial u_{\bar{\beta}}} \circ \mathfrak{f}). \frac{\partial (u_{\bar{\beta}} \circ \mathfrak{f})}{\partial u_{\bar{\alpha}}}. \label{eq1}
\end{equation}

Combining \ref{coord} and \ref{eq1} at the point $0$, we obtain  Equation \ref{first}.
\newline

On the other hand, taking partial derivatives of two sides of Equation \ref{eq1} at the point $0$ and using Chern rule gives Equations \ref{eq3}, \ref{eq4} and \ref{eq5}.

\end{proof}

\begin{corollary} \label{corollary8}

Entries of the matrices $\bar{A}$ and $\bar{B}$ in Lemma \ref{lemma4} can be given as follows:

\begin{equation}
\bar{A}_{\alpha}=A_{\dot{\alpha}} \hspace{1cm} \bar{A}_{\dot{\alpha}}=A_{\alpha} \nonumber
\end{equation}

\begin{equation}
\bar{B}_{\alpha \beta}=B_{\dot{\alpha} \dot{\beta}} \hspace{1cm} \bar{B}_{\alpha \dot{\beta}}=B_{\dot{\alpha} \beta} \hspace{1cm} \bar{B}_{\dot{\alpha} \beta}=B_{{\alpha}\dot{\beta}} \hspace{1cm} \bar{B}_{\dot{\alpha}\dot{\beta}}=B_{\alpha\beta} \nonumber
\end{equation}

\end{corollary}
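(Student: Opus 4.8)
The plan is to compute each block of $\bar{A}$ and $\bar{B}$ directly from its definition as a partial derivative of $\phi\circ\mathfrak{f}$ at the origin, and then substitute the four identities of the preceding flip lemma, namely Equations (\ref{first}), (\ref{eq3}), (\ref{eq4}) and (\ref{eq5}). First I would fix the index convention so that it agrees with the one used for $A$ and $B$ in Proposition \ref{lambda}: an undotted index records a derivative in a $u$-direction and a dotted index one in a $w$-direction, the first index naming the outer differentiation variable and the second the inner one. Thus $\bar{A}_{\alpha}=\frac{\partial(\phi\circ\mathfrak{f})}{\partial u_{\alpha}}|_0$ and $\bar{A}_{\dot\alpha}=\frac{\partial(\phi\circ\mathfrak{f})}{\partial w_{\alpha}}|_0$, while the four blocks $\bar{B}_{\alpha\beta},\bar{B}_{\dot\alpha\beta},\bar{B}_{\alpha\dot\beta},\bar{B}_{\dot\alpha\dot\beta}$ are the second partials $\frac{\partial^2(\phi\circ\mathfrak{f})}{\partial u_\alpha\partial u_\beta}|_0$, $\frac{\partial^2(\phi\circ\mathfrak{f})}{\partial w_\alpha\partial u_\beta}|_0$, $\frac{\partial^2(\phi\circ\mathfrak{f})}{\partial u_\alpha\partial w_\beta}|_0$ and $\frac{\partial^2(\phi\circ\mathfrak{f})}{\partial w_\alpha\partial w_\beta}|_0$ respectively.

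The $\bar{A}$ identities are then immediate: Equation (\ref{first}) gives $\bar{A}_\alpha=\frac{\partial(\phi\circ\mathfrak{f})}{\partial u_\alpha}|_0=\frac{\partial\phi}{\partial w_\alpha}|_0=A_{\dot\alpha}$ and, in the same way, $\bar{A}_{\dot\alpha}=A_\alpha$, with no further work required.

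For the four blocks of $\bar{B}$ I would feed each defining second partial into the matching identity of the flip lemma and then rewrite the resulting derivative of $\phi$ in the $A,B$-convention. For instance (\ref{eq3}) yields $\bar{B}_{\alpha\beta}=\frac{\partial^2\phi}{\partial w_\alpha\partial w_\beta}|_0=B_{\dot\alpha\dot\beta}$, (\ref{eq5}) yields $\bar{B}_{\dot\alpha\dot\beta}=B_{\alpha\beta}$, and the two mixed blocks follow from (\ref{eq4}) after a relabeling $\alpha\leftrightarrow\beta$, giving $\bar{B}_{\alpha\dot\beta}=B_{\dot\alpha\beta}$ and $\bar{B}_{\dot\alpha\beta}=B_{\alpha\dot\beta}$. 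The only point that needs care --- and the sole genuine obstacle, such as it is --- is bookkeeping: the flip lemma is stated with a particular order of the two differentiations, whereas the block labels of $B$ encode a fixed order, so at two or three places I would invoke the symmetry of mixed second-order partial derivatives to align the inner and outer variables before reading off the dotted/undotted index pattern. Once all four blocks are in hand they assemble into the matrix displayed in Equation (\ref{localform}), which is precisely the local form of $\tilde\ell$ asserted in Theorem \ref{lemma4}, and the corollary follows.
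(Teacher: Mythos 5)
Your proposal is correct and follows essentially the same route as the paper: the paper's proof likewise reads off each block of $\bar{B}$ as a second partial of $\phi\circ\mathfrak{f}$ at the origin and substitutes Equations (\ref{eq3}), (\ref{eq4}) and (\ref{eq5}) (with the $\bar{A}$ identities left as immediate from (\ref{first})). Your explicit remark about invoking symmetry of mixed partials to align the order of differentiation is a welcome clarification of a step the paper performs only implicitly.
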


\begin{proof}

Using the definition of the matrix $\bar{B}$ and Equations (\ref{eq3}), (\ref{eq4}), and (\ref{eq5}) we have

\begin{equation}
[\bar{B}_{\alpha \beta}]= \frac{\partial ^2 (\phi \circ \mathfrak{f})}{\partial u_{\alpha} \partial u_{\beta}}|_0=\frac{\partial ^2 \phi }{\partial w_{\alpha} \partial w_{\alpha}}|_0=B_{\dot{\alpha} \dot{\beta}} \hspace{1cm}
[\bar{B}_{\dot{\alpha}\beta}]=\frac{\partial ^2 (\phi \circ \mathfrak{f})}{\partial w_{\alpha} \partial u_{\beta}}|_0=\frac{\partial ^2 \phi }{\partial u_{\alpha} \partial w_{\alpha}}|_0=B_{\alpha \dot{\beta}} \nonumber
\end{equation}

and

\begin{equation}
[\bar{B}_{\alpha \dot{\beta}}]= \frac{\partial ^2 (\phi \circ \mathfrak{f})}{\partial u_{\alpha} \partial w_{\beta}}|_0=\frac{\partial ^2 \phi }{\partial w_{\alpha} \partial u_{\beta}}|_0=B_{\alpha \dot{\beta}} \hspace{1cm}
[\bar{B}_{\dot{\alpha} \dot{\beta}}]= \frac{\partial ^2 (\phi \circ \mathfrak{f})}{\partial w_{\alpha} \partial w_{\beta}}|_0=\frac{\partial ^2 \phi }{\partial u_{\alpha} \partial u_{\beta}}|_0=B_{\alpha \beta}. \nonumber 
\end{equation}

\end{proof}

\begin{lemma} \label{lemmaomega}
The local form of the function $\Omega$ is 
$$\hat{\Omega}(u,f,y,g)=(u,y,f,g)$$.
\end{lemma}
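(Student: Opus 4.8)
The plan is to compute the two trivializations $\varphi_1$ and $\varphi_2$ explicitly on an arbitrary point $j^1\Phi \in \jpe$ and then read off the local form from $\hat{\Omega} = \varphi_1 \circ \Omega \circ \varphi_2^{-1}$. Since $\Omega$ is the identity map of $\jpe$, this reduces to $\hat{\Omega} = \varphi_1 \circ \varphi_2^{-1}$, so the entire task is to see that $\varphi_1$ and $\varphi_2$ record the same data in a different order.

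First I would unwind $\varphi_2$. Writing $\Phi(0) = \psi^{-1}(x,y)$ with $x \in U$, $y \in E$, and using $pr_1 \circ \psi = \pie$, I apply $\psi^1$ together with the diffeomorphism $\jp (U \times E) \cong \jp U \times \jp E$ to identify $\psi^1(j^1\Phi)$ with the pair $(j^1(\pie \circ \Phi),\, j^1(pr_2 \circ \psi \circ \Phi))$. Applying $\Psi_M$ to the first component yields $(x,f)$ with $f = \begin{bmatrix}\frac{\partial(\bar{x}_i \circ \Phi)}{\partial u^{\alpha}}|_0\end{bmatrix}$, and applying $\Psi_E$ to the second yields $(y,g)$ with $g = \begin{bmatrix}\frac{\partial(\bar{y}_j \circ \Phi)}{\partial u^{\alpha}}|_0\end{bmatrix}$. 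Composing with $\varphi \times id$ then gives $\varphi_2(j^1\Phi) = (\varphi(x),\, f,\, y,\, g)$.

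Next I would unwind $\varphi_1$. Applying $\tilde{\psi}$ gives $(\Phi(0), \frac{\partial \Phi}{\partial u_\alpha}|_0) = (\psi^{-1}(x,y),\, A)$, where $A$ is the $(m+k)\times p$ Jacobian of $\Phi$. The map $\psi \times \xi$ then separates $\Phi(0)$ into $(x,y)$ and splits $A$ along its $\RR^m$- and $\RR^k$-blocks via $\xi$, producing exactly the two matrices $f$ and $g$ recorded above; this is precisely the content of the remark preceding this lemma, namely that the blocks of $A$ in Equation (\ref{matrix}) coincide with $f$ and $g$. Composing with $\varphi \times id$ gives $\varphi_1(j^1\Phi) = (\varphi(x),\, y,\, f,\, g)$.

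Putting these together, set $u = \varphi(x)$. Then $\varphi_2^{-1}(u,f,y,g) = j^1\Phi$, and since $\Omega$ is the identity, $\hat{\Omega}(u,f,y,g) = \varphi_1(j^1\Phi) = (u,y,f,g)$, which is the asserted formula (equivalently, $\hat{\Omega} = pr_1 \times pr_3 \times pr_2 \times pr_4$). The one step requiring genuine care is the identification of the $f$- and $g$-blocks across the two routes: one must verify that differentiating $\pie \circ \Phi$ and $pr_2 \circ \psi \circ \Phi$ (the secondary route) produces the same two matrices as splitting the full Jacobian $A$ via $\xi$ (the primary route). This follows from the coordinate identities $\bar{x}_i = x_i \circ \pie$ and $\bar{y}_j = y_j \circ pr_2 \circ \psi$, but it is the only place where the bookkeeping could go wrong; once it is in hand, the lemma is simply the observation that the identity map is represented in these charts by the transposition of the two middle factors.
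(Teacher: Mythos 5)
Your proposal is correct and follows essentially the same route as the paper's proof: both unwind $\varphi_2$ and $\varphi_1$ explicitly on a point $j^1\Phi$, use the splitting map $\xi$ to match the $\RR^m$- and $\RR^k$-blocks of the Jacobian with the matrices $f$ and $g$ from the secondary route, and conclude that $\hat{\Omega}$ is the transposition of the two middle factors. The bookkeeping step you flag (that differentiating $\pie\circ\Phi$ and $pr_2\circ\psi\circ\Phi$ reproduces the blocks of the full Jacobian) is exactly the identification the paper carries out via the displayed computation of $\xi\bigl(\frac{\partial(\psi\circ\Phi)}{\partial u_{\alpha}}|_0\bigr)$.
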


\begin{proof}

For all $(u,f,y,g) \in \varphi_2 ( (\pie)^{-1}(\jp U))$, then there exists a unique $j^1 \Phi \in (\pie)^{-1}(\jp U)$ such that $\varphi_2(j^1 \Phi)=(u,f,y,g)$. Thus,

\begin{eqnarray*}
&& (u,f,y,g) \\
&=&((\varphi \times id_{ L(\RR^p, \RR^m) \times E \times L(\RR ^p, \RR ^k)}) \circ (\Psi_M \times \Psi_E) \circ \psi^1)(j^1 \Phi) \\
         &=& ((\varphi \times id_{ L(\RR^p, \RR^m) \times E \times L(\RR ^p, \RR ^k)}) \circ (\Psi_M \times \Psi_E)) (j^1 (\psi \circ \Phi)) \\
				 &=& ((\varphi \times id_{ L(\RR^p, \RR^m) \times E \times L(\RR ^p, \RR ^k)}) \circ (\Psi_M \times \Psi_E)) (j^1 (\pie \circ \Phi), j^1 (pr_2 \circ \psi \circ \Phi)) \\
				&=& (\varphi \times id_{ L(\RR^p, \RR^m) \times E \times L(\RR ^p, \RR ^k)})((\pie \circ \Phi)(0), (\frac{\partial (\pie \circ \Phi)}{\partial u_{\alpha}}|_0, (pr_2 \circ \psi \circ \Phi)(0), \frac{\partial (pr_2 \circ \psi \circ \Phi)}{\partial u_{\alpha}}|_0)) \\
				&=& ((\varphi \circ \pie \circ \Phi)(0), \frac{\partial (\pie \circ \Phi)}{\partial u_{\alpha}}|_0, (pr_2 \circ \psi \circ \Phi)(0), \frac{\partial (pr_2 \circ \psi \circ \Phi)}{\partial u_{\alpha}}|_0).
\end{eqnarray*}

Therefore, there exist the following equations:

\begin{equation*}
u=(\varphi \circ \pie \circ \Phi)(0), \hspace{1cm} f=\frac{\partial (\pie \circ \Phi)}{\partial u_{\alpha}}|_0, \hspace{1cm} y=(pr_2 \circ \psi \circ \Phi)(0), \hspace{1cm} g= \frac{\partial (pr_2 \circ \psi \circ \Phi)}{\partial u_{\alpha}}|_0. 
\end{equation*}

On the other hand,  $ \frac{\partial (\psi \circ \Phi)}{\partial u_{\alpha}}|_0: \RR^p \to \RR^{m+k}$. The derivative can be written of form

 \begin{equation*}
	\frac{\partial (\psi \circ \Phi)}{\partial u_{\alpha}}|_0=[\frac{\partial (pr_1 \circ \psi \circ \Phi)}{\partial u_{\alpha}}|_0 \hspace{1cm} \frac{\partial (pr_2 \circ \psi \circ \Phi)}{\partial u_{\alpha}}|_0 ].
\end{equation*}

Thus,  

\begin{equation*}
\xi (\frac{\partial (\psi \circ \Phi)}{\partial u_{\alpha}}|_0)= (\frac{\partial (pr_1 \circ \psi \circ \Phi)}{\partial u_{\alpha}}|_0 , \frac{\partial (pr_2 \circ \psi \circ \Phi)}{\partial u_{\alpha}}|_0 ).
\end{equation*}
 
Using this equation, we have

\begin{eqnarray*}
& &((\varphi \times id) \circ (\psi \times \xi) \circ \tilde{\psi})(j^1 \Phi) = ((\varphi \times id) \circ (\psi \times \xi))(\Phi(0), \frac{\partial (\psi \circ \Phi)}{\partial u_{\alpha}}) \\           
                                                                           &=& (\varphi \times id) ((pr_1 \circ \psi \circ \Phi)(0), (pr_2 \circ \psi \circ \Phi)(0),  \xi(\frac{\partial (\psi \circ \Phi)}{\partial u_{\alpha}})) \\
					                                                                 &=& ((\varphi \circ \pie \circ \Phi)(0), (pr_2 \circ \psi \circ \Phi)(0), (\frac{\partial (pr_1 \circ \psi \circ \Phi)}{\partial u_{\alpha}}|_0 , \frac{\partial (pr_2 \circ \psi \circ \Phi)}{\partial u_{\alpha}}|_0 ) \\
																																					&=& (u,y,f,g).																																				
\end{eqnarray*}

This proves that $\hat{\Omega}(u,f,y,g)=(u,y,f,g)$.

\end{proof}

\noindent {\bf{Acnowledgements.}} The Author would like to thank to Idaho State
University Department of Mathematics in particular Prof. Dr. Robert Fisher, Jr. for being such nice hosts during
her visit in Fall 2015. She also acknowledges library support from ISU.

\end{document}